\newtheorem{theorem}{Theorem}[section]
\newtheorem{lemma}[theorem]{Lemma}
\newtheorem{proposition}[theorem]{Proposition}
\theoremstyle{definition}
\newtheorem{example}[theorem]{Example}
\newtheorem{definition}[theorem]{Definition}
\newtheorem{remark}[theorem]{Remark}
\title{Characterising bimodal collections of sets in finite groups}
\author{S. Huczynska and M.B.~Paterson}
\begin{document}
\maketitle
\begin{abstract}
A collection of disjoint subsets ${\cal A}=\{A_1,A_2,\dotsc,A_m\}$ of a finite abelian group is said to have the \emph{bimodal} property if, for any non-zero group element $\delta$, either $\delta$ never occurs as a difference between an element of $A_i$ and an element of some other set $A_j$, or else for every element $a_i$ in $A_i$ there is an element $a_j\in A_j$ for some $j\neq i$ such that $a_i-a_j=\delta$.  This property arises in various familiar situations, such as the cosets of a fixed subgroup or in a group partition, and has applications to the construction of optimal algebraic manipulation detection (AMD) codes.  In this paper,  we obtain a structural characterisation for bimodal collections of sets.
\end{abstract}

\section{Introduction}\label{sec:background}
Let $G$ be a finite abelian group of order $n$, written additively, with identity $0$. Let ${\cal A}=\{A_1,A_2,\dotsc,A_m\}$ be a collection of disjoint subsets of $G$.  Then $\cal A$ is said to have the {\em bimodal property} if, for any non-identity element $\delta$ of $G$, either $\delta$ \emph{never} occurs as a difference between an element of $A_i$ and an element of some other set $A_j$, or else for \emph{every} element $a_i$ in $A_i$ there is an element $a_j\in A_j$ for some $j\neq i$ such that $a_i-a_j=\delta$.

Let $k_i=|A_i|$ for $i=1,2,\dotsc,m$.  For each $\delta\in G\setminus\{0\}$, we let
$$N_i(\delta)=|\{(a_i,a_j): \delta=a_i-a_j,\ a_i\in A_i,\ a_j\in A_j,\ i\neq j\}|.$$  
Then $\cal A$ has the {\em bimodal property} if $N_i(\delta)\in \{0,k_i\}$ for $i=1,2,\dotsc,m$.  

Although this is a very natural property which occurs in many familiar settings, it seems that the property has not received attention until now.  It has applications to cryptography, having been defined by Huczynska and Paterson \cite{WEDF} in the context of studying {\em reciprocally-weighted external difference families} (RWEDFs), which were shown to exhibit this property in certain parameter situations.   RWEDFs correspond to optimal Algebraic Manipulation Detection (AMD) codes \cite{cramer,PatSti}, and better understanding of this property can lead to new constructions for such codes.

Perhaps the most natural occurrence of this property is as follows:
\begin{lemma}\label{lem:cosets}
Let $H$ be a subgroup of an abelian group $G$.  If ${\cal C}=\{C_1, \ldots, C_m \}$ is a collection of cosets of $H$, then ${\cal C}$ has the bimodal property.  
\end{lemma}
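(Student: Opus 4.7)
The plan is to exploit the fact that for any two cosets $C_i = g_i + H$ and $C_j = g_j + H$ of the subgroup $H$, the set of differences $\{a_i - a_j : a_i \in C_i,\, a_j \in C_j\}$ is precisely the coset $(g_i - g_j) + H$. So whether an element $\delta$ arises as a cross-difference between $C_i$ and $C_j$ is controlled entirely by the coset of $H$ containing $\delta$.

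Concretely, I would fix a nonzero $\delta \in G$ and an index $i$, and split into two cases according to whether $\delta \in H$ or not. If $\delta \in H$, then for any $j \neq i$ and any $a_i \in C_i$, the element $a_i - \delta$ lies in the same coset $C_i$ as $a_i$ and therefore cannot lie in any $C_j$ with $j \neq i$; hence $N_i(\delta) = 0$. If $\delta \notin H$, then for each $a_i \in C_i$ the element $a_i - \delta$ lies in the unique coset $C_i - \delta$, which differs from $C_i$ precisely because $\delta \notin H$. Either this coset appears in the collection as some $C_j$ (with necessarily $j \neq i$), in which case every $a_i \in C_i$ contributes exactly one valid pair $(a_i,\, a_i - \delta)$, giving $N_i(\delta) = |C_i| = k_i$; or it does not appear in $\cal C$, in which case $N_i(\delta) = 0$.

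Since in all cases $N_i(\delta) \in \{0, k_i\}$, the bimodal property follows. There is essentially no obstacle here: the argument is a direct consequence of the fact that subtracting a fixed element is a bijection of $G$ that permutes the cosets of $H$. The only thing to be slightly careful about is the uniqueness of the partner $a_j = a_i - \delta$, which ensures the count is exactly $k_i$ rather than some larger multiple of it.
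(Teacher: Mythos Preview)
Your argument is correct and essentially matches the paper's proof: both rely on the observation that $C_i - C_j$ is a single coset of $H$, so that for $\delta$ in such a coset every $a_i \in C_i$ has the unique partner $a_j = a_i - \delta \in C_j$, while any other $\delta$ contributes nothing. Your explicit split into the cases $\delta \in H$ and $\delta \notin H$ is just a slightly more detailed presentation of the same idea.
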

\begin{proof}
For fixed $i$ and $1 \leq j \leq m$ with $i \neq j$, the sets $C_i-C_j$ comprise $m-1$ distinct cosets of $H$.  For any $\delta \in C_i-C_j$ and every $x \in C_i$ there exists a unique $y \in C_j$ such that $x-y= \delta$.  For any $\delta \in G \setminus \cup_{j \neq i} (C_i-C_j)$, the element $\delta$ occurs zero times as a difference out of $C_i$.
\end{proof}

\begin{example}
Let $G=\mathbb{Z}_{10}$, $H=\{0,5\}$ and let $C_1=\{1,6\}$, $C_2=\{3,8\}$ and $C_3=\{4,9\}$.  We observe that $N_3(1)=N_3(3)=N_3(6)=N_3(8)=2=k_3$, whereas $N_3(2)=N_3(4)=N_3(5)=N_3(7)=N_3(9)=0$.  A similar check of the values of $N_1(\delta)$ and $N_2(\delta)$ for $\delta\in \mathbb{Z}_{10}\setminus\{0\}$ shows that ${\cal A}=\{A_1,A_2,A_3\}$ has the bimodal property.
\end{example}

This is such a natural setting in which the bimodal property occurs, that one might initially suspect that cosets of a fixed subgroup are the only nontrivial collections of sets with this property. However, we shall show that this is not in fact the case, and that a much richer structure arises.

In Section~\ref{sec:basicresults} of this paper we make some fundamental observations about properties of bimodal collections of sets, and give examples and techniques for creating new from old.  In Section~\ref{sec:classification} we classify bimodal collections of sets by considering certain subgroups associated to them, and we demonstrate that every such collection can be constructed in a suitable group.  
%In Section~\ref{sec:rwedfs} we introduce the notion of an RWEDF, we use our classification results to determine which bimodal collections of sets are also RWEDFs, and we discuss resulting implications  on the parameters that RWEDFs can have.

\section{Tools and Constructions}\label{sec:basicresults}
The bimodal property is defined in terms of differences between elements lying in distinct members of a collection ${\cal A}=\{A_1,A_2,\dotsc,A_m\}$ of disjoint subsets of a group $G$ ({\em external} differences).  

The following notation will be used throughout the paper.  Let $A=\bigcup_{i=1}^m A_i$, and for $j$ with $1\leq j \leq m$ let $B_j=A\setminus A_j$.  So for each $1 \leq j \leq m$, $A= A_j \sqcup B_j$ (where the notation $S\sqcup T$ denotes the union of disjoint sets $S$ and $T$).  Then 
$$N_j(\delta)=|\{(a,b):\delta=a-b,\ a\in A_j,\ b\in B_j\}|.$$  
We will also find it useful to consider differences between elements within the same subset ({\em internal} differences).  
\begin{definition}
Let $A_i$ be a subset of a finite abelian group $G$.  We define the {\em internal difference group} $H_i$ of $A_i$ to be the subgroup of $G$ generated by all elements of the form $x-y$ with $x,y\in A_i$.
\end{definition}

In Section~\ref{sec:background} we saw a bimodal collection of sets where each set was a coset of a subgroup $H$ of $G$.  In this case we have $H_i=H$ for each set $A_i$ in the collection.  
\begin{remark}
The group $H_i$ has the property that $A_i$ is contained in a single coset of $H_i$, and is the smallest subgroup of $G$ with this property.  In the case where $|A_i|=1$, the group $H_i=\{0\}$ and its cosets are the singleton sets. 
\end{remark}

We may view singleton sets as cosets of the identity subgroup; throughout what follows we will take this approach.  

For any element $a \in A_i$ we have that $a+H_i$ is the coset of $H_i$ containing $A_i$.  Throughout what follows we will select an arbitrary element $a_i\in A_i$ for $i=1,2,\dotsc,m$ and represent the coset of $H_i$ containing $A_i$ by $a_i+H_i$.

The following useful characterisation of bimodal collections of sets was established in \cite{WEDF}.
\begin{theorem}[\cite{WEDF}]\label{thm:bimodal}
Let $G$ be a finite abelian group and let ${\cal A}=\{A_1,A_2,\dotsc,A_m\}$ be a collection of disjoint subsets of $G$.  Then $\cal A$ has the bimodal property if and only if for each $i$ the set $B_i$ is a union of cosets of the subgroup $H_i$.
\end{theorem}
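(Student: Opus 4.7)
The plan is to prove both implications by exploiting the tight link between a value $N_i(\delta)=k_i$ (which forces translation structure on witnesses) and the defining property $H_i = \langle A_i - A_i \rangle$.

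For the forward direction, assume $\mathcal{A}$ is bimodal and fix $i$. I want to show that if $b \in B_i$ then $b + H_i \subseteq B_i$. Pick any $a \in A_i$ and set $\delta = a - b \neq 0$. Since $(a,b)$ witnesses $N_i(\delta) \geq 1$, bimodality forces $N_i(\delta) = k_i$, so for \emph{every} $a' \in A_i$ there is some $b' \in B_i$ with $a' - b' = \delta$; rearranging gives $b' = b + (a' - a)$. Thus $b + (A_i - a) \subseteq B_i$, meaning that adding any element of $A_i - A_i$ to $b$ keeps us in $B_i$. Iterating this observation (each newly produced element of $B_i$ serves as the starting $b$ for the next step), we see that $b + S \subseteq B_i$ for the set $S$ of all finite integer combinations of elements of $A_i - A_i$. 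But $S = H_i$ by definition of the internal difference group, so $b + H_i \subseteq B_i$ and $B_i$ is a union of $H_i$-cosets.

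For the converse, assume $B_i$ is a union of cosets of $H_i$ and take any nonzero $\delta$ with $N_i(\delta) > 0$, witnessed by some $a \in A_i$, $b \in B_i$ with $a - b = \delta$. For any $a' \in A_i$, the element $a' - a$ lies in $H_i$ (since $A_i \subseteq a + H_i$), so $b + (a' - a) \in b + H_i \subseteq B_i$. This element equals $a' - \delta$, providing, for each $a' \in A_i$, a witness pair $(a', a'-\delta)$ contributing to $N_i(\delta)$. These witnesses are distinct because the $a'$ are, so $N_i(\delta) \geq k_i$; the trivial upper bound $N_i(\delta) \leq |A_i| = k_i$ (each $a \in A_i$ forces $b = a - \delta$) then gives equality, establishing the bimodal condition.

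The only real subtlety is in the forward direction, where one must be careful that the single application of bimodality produces translations by $A_i - A_i$, but we need translations by the whole \emph{group} $H_i$. This is resolved by the iterative argument above, which leverages the fact that once a coset-closure property is established on a generating set for $H_i$, it automatically extends to all of $H_i$.
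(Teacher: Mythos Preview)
Your proof is correct. The paper does not actually supply a proof of this theorem; it is quoted from \cite{WEDF} and stated without argument, so there is nothing in the present paper to compare your approach against. Your self-contained argument is sound: the forward direction correctly observes that a single application of bimodality yields $b + (A_i - A_i) \subseteq B_i$ for every $b\in B_i$, and since this closure property holds for \emph{every} element of $B_i$, iteration gives closure under the subsemigroup generated by $A_i - A_i$; because $A_i - A_i$ is symmetric, this subsemigroup is exactly $H_i$. The converse is straightforward, and your explicit bookkeeping of the upper bound $N_i(\delta)\le k_i$ cleanly closes the argument.
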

\begin{remark}\label{remark2.4}
Theorem~\ref{thm:bimodal} tells us that if an element $v$ is contained in $B_j$ for some $j$ with $1\leq j\leq m$ then $B_j$ contains the entire coset $v+H_j$, i.e. we have that $v+h_j\in B_j$ for all $h_j\in H_j$.
\end{remark}

\begin{example}\label{ex:trivialbimodal}
\begin{itemize}
\item[(i)]  A family of disjoint subsets of an abelian group $G$ in which all subsets have size $1$ is bimodal by Theorem \ref{thm:bimodal}.  
\item[(ii)] A family consisting of a single subset of an abelian group $G$ is trivially bimodal.
\end{itemize}
\end{example}

We now exhibit a non-trivial example of a bimodal collection of sets that appeared in \cite{WEDF}, which may at first seem surprising.  It is very different in structure to our coset example, but also represents a classic and much-studied group theory situation. For a group $G$ we will let $G^*$ denote the set of nonzero elements of $G$.

\begin{definition}
Let $G$ be a finite group.  If $G$ has subgroups $S_1,S_2,\dotsc, S_m$ with the property that $S_1^*,S_2^*,\dotsc , S_m^*$ partition $G^*$, then the collection of subgroups $S_1,S_2,\dotsc, S_m$ is called a \emph{group partition} of $G$.  A group partition is called \emph{trivial} if $m=1$. 
\end{definition}
\begin{lemma}[\cite{WEDF}]\label{group_partition}
Let the collection of subgroups $\{ S_1, \ldots, S_m\}$ be a group partition of an abelian group $G$ where $|S_i|>2$ for each $i$.  Then the collection of sets ${\cal C}=\{S_1^*, \ldots, S_m^*\}$ has the bimodal property. 
\end{lemma}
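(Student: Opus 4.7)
The plan is to invoke Theorem~\ref{thm:bimodal}: it suffices to show that for each $i$, the set $B_i = \bigcup_{j \neq i} S_j^*$ is a union of cosets of the internal difference group $H_i$ of $S_i^*$. The structure of a group partition makes $B_i$ easy to identify, and most of the work goes into pinning down $H_i$.

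First I would observe that since $S_1^*, \dotsc, S_m^*$ partition $G^*$, we have $B_i = G^* \setminus S_i^* = G \setminus S_i$. So the cosets of $H_i$ that we need to cover $B_i$ with are simply the non-identity cosets of $S_i$, provided we can establish that $H_i = S_i$.

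The key step is to show $H_i = S_i$, and this is where the hypothesis $|S_i|>2$ enters. The inclusion $H_i \subseteq S_i$ is immediate because $S_i$ is a subgroup containing $S_i^*$, so it contains every difference $x-y$ with $x,y \in S_i^*$. For the reverse inclusion, since $0 \in H_i$ and $S_i = S_i^* \cup \{0\}$, it is enough to prove that every $s \in S_i^*$ lies in $H_i$. I would split into two cases according to the order of $s$. If $2s \neq 0$, then $2s \in S_i^*$ and $s = 2s - s \in H_i$. If $2s = 0$, then I use $|S_i| \geq 3$ to pick $t \in S_i^* \setminus \{s\}$; now $s+t \in S_i$ and $s+t \neq 0$ (since $s+t=0$ would give $t=-s=s$, a contradiction), so $s+t \in S_i^*$ and $s = (s+t) - t \in H_i$.

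Once $H_i = S_i$ is in hand, $B_i = G \setminus S_i$ is manifestly a union of cosets of $S_i = H_i$ (namely, all the cosets except $S_i$ itself), and Theorem~\ref{thm:bimodal} delivers the bimodal property. The only real obstacle is the argument for $H_i = S_i$ in the case when $S_i$ contains elements of order $2$, which is precisely where one sees that the hypothesis $|S_i|>2$ is being used (to guarantee a second nonzero element $t$ whose sum with $s$ remains nonzero).
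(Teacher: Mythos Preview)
Your proof is correct and follows exactly the same route as the paper's: identify $H_i=S_i$, observe $B_i=G\setminus S_i$, and apply Theorem~\ref{thm:bimodal}. The paper simply asserts that the internal difference group of $S_i^*$ is $S_i$, whereas you spell out the verification (including the case split that isolates where $|S_i|>2$ is used); this added detail is welcome but does not change the approach.
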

\begin{proof}
For each $i$, the internal difference group of $S_i^*$ is $S_i$.  It follows directly that the union $\bigcup_{j \neq i} S_j^*$ is $G \setminus S_i$, a union of cosets of $S_i$.
\end{proof}
\begin{example} Let $G=\mathbb{Z}_3 \times \mathbb{Z}_3$.  Let $A_1=\{(1,1), (2,2)\}$, $A_2=\{(0,1), (0,2)\}$, $A_3=\{(1,2), (2,1)\}$ and $A_4=\{(1,0), (2,0)\}$.  Observe that for each $A_i$, the subgroup $H_i$ is precisely $A_i \cup \{0\}$.  Then the collection $\cal A$ $=\{ A_1, A_2, A_3, A_4 \}$ is bimodal. 
\end{example}

The topic of group partitions has been studied extensively; see Zappa \cite{Zap} for a comprehensive survey.  In the abelian case, the groups which possess non-trivial group partitions are completely characterised as the elementary abelian $p$-groups.  These can be viewed as partitions of vector spaces over $\mathbb{Z}_p$, and have been widely studied in this context; for example, see Heden \cite{heden}.

%NOTE : what precisely are we defining as a $*$-partition and the $*$-decomposition? It looks like in our RWEDF paper, the partition is the set of subgroups, ie we still need to actively remove the zero from each to get a RWEDF.  Is this what we intended?  We might wish to use the phrase ``gives rise to'' on P13 of the RWEDF paper, or re-do our $*$-partition definition to have the zero pre-removed.

In fact the construction of Lemma~\ref{group_partition} may be extended to give examples in more general groups:

\begin{lemma}[\cite{WEDF}]
Let $\cal A$ be a collection $ {\cal A}=\{A_1, \ldots, A_m\}$ of disjoint subsets of an abelian group $G$ that partition $G^*$ and have the property that any $A_i$ with $|A_i|>1$ is of the form $S^*$ for some subgroup $S\leq G$.  Then $\cal A$ is bimodal.
\end{lemma}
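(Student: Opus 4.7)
The plan is to verify the hypothesis of Theorem~\ref{thm:bimodal}: for each $i$, the set $B_i = A \setminus A_i$ must be a union of cosets of $H_i$. Since the $A_j$ partition $G^*$ we have $A = G^*$, so $B_i = G^* \setminus A_i$; all the work lies in identifying $H_i$ and checking the coset structure of the complement.

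I would split into two cases based on $|A_i|$. If $|A_i| = 1$, then $H_i = \{0\}$, whose cosets are singletons, so $B_i$ is trivially a union of such cosets. If $|A_i| > 1$, then by hypothesis $A_i = S_i^*$ for some subgroup $S_i \leq G$ with $|S_i| \geq 3$. I would claim $H_i = S_i$: the inclusion $H_i \subseteq S_i$ is immediate because every generator $x - y$ of $H_i$ lies in $S_i$. For the reverse, given any $s \in S_i$, I would pick $y \in S_i^* \setminus \{-s\}$, which exists because $|S_i^*| \geq 2$, and set $x = s + y$; then $x,y \in S_i^* = A_i$ and $s = x-y \in H_i$. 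Hence $H_i = S_i$, and
\[
B_i = G^* \setminus S_i^* = G \setminus S_i,
\]
which is the union of all non-identity cosets of $S_i = H_i$, as required.

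Applying Theorem~\ref{thm:bimodal} then yields that $\cal A$ is bimodal. The main (and only) non-bookkeeping step is the identification $H_i = S_i$ in the second case, and this is a short group-theoretic observation exploiting $|S_i| \geq 3$; no further obstacle arises, since the $A_j$ with $j \neq i$ play no role individually beyond contributing to the partition $A = G^*$.
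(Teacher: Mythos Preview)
Your proof is correct. The paper does not actually supply a proof for this lemma (it is quoted from \cite{WEDF}), but your argument is exactly in the spirit of the paper's proof of the preceding Lemma~\ref{group_partition}: identify $H_i$ with the relevant subgroup $S_i$ and observe that $B_i=G\setminus S_i$ is a union of nontrivial cosets, then invoke Theorem~\ref{thm:bimodal}. Your additional case split to handle singleton $A_i$ (where $H_i=\{0\}$) is precisely the extra ingredient needed over Lemma~\ref{group_partition}, and your verification that $H_i=S_i$ when $|S_i|\geq 3$ is clean and correct.
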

\begin{example}
Let $G=\mathbb{Z}_{12}$.  Then $G$ has a subgroup $\{0,4,8\}$ of order $3$ and a subgroup $\{0,3,6,9\}$ of order $4$.  The sets $\{4,8\}$, $\{3,6,9\}$, $\{1\}$, $\{2\}$, $\{5\}$, $\{7\}$, $\{10\}$, $\{11\}$ form a bimodal collection.
\end{example}

Having seen some examples of constructions of bimodal collections of sets, we now introduce some approaches to constructing new collections from old, which we exploit in Section~\ref{sec:classification}.  First we observe that a bimodal collection can undergo a shift without its bimodality being affected.

\begin{lemma}\label{lemma:youcanshift}
Let ${\cal A}=\{A_1,A_2,\dotsc,A_m\}$ be a bimodal collection of disjoint subsets of an abelian group $G$. Then the collection ${\cal A}^\prime$ given by $\{ A_1+g, A_2+g, \dotsc, A_m+g\}$ where $g \in G$ is also bimodal.
\end{lemma}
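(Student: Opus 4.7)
The plan is to observe that translation by a fixed group element preserves all external differences, so the quantities $N_i(\delta)$ are unchanged when passing from ${\cal A}$ to ${\cal A}'$. Concretely, I would let $A_i' = A_i + g$ and $B_i' = \bigcup_{j\neq i} A_j' = B_i + g$, then note that for any $a \in A_i$ and $b \in B_i$ we have $(a+g) - (b+g) = a-b$. Hence the pair $(a,b)$ contributes to $N_i(\delta)$ for ${\cal A}$ if and only if the pair $(a+g,b+g)$ contributes to the corresponding count for ${\cal A}'$, so the two counts agree for every $\delta \in G^*$.

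Before invoking bimodality, I should also verify that ${\cal A}'$ is still a collection of disjoint subsets, but this is immediate because translation by $g$ is a bijection of $G$, which preserves intersections and cardinalities. In particular $|A_i'| = k_i$, so the thresholds $\{0, k_i\}$ in the definition of bimodality are the same for ${\cal A}'$ as for ${\cal A}$. Combining this with the equality of the counts, the condition $N_i(\delta) \in \{0, k_i\}$ transfers directly from ${\cal A}$ to ${\cal A}'$.

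An equivalent, slightly more structural alternative would be to apply Theorem~\ref{thm:bimodal}. The internal difference group of $A_i + g$ equals $H_i$, since internal differences are also unchanged by a shift. Moreover $B_i' = B_i + g$, and if $B_i$ is a union of cosets of $H_i$ then so is any translate of $B_i$. Either route gives the result in a couple of lines; there is no real obstacle, as the lemma is essentially the statement that the bimodal property is translation-invariant, which reduces to the translation-invariance of the difference operation on an abelian group.
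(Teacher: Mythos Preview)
Your proposal is correct and follows essentially the same approach as the paper: both argue that adding $g$ to each element leaves every external difference $a_i-a_j$ unchanged, so the bimodal property transfers immediately. Your write-up is simply more detailed (explicitly checking disjointness and cardinalities, and offering the alternative via Theorem~\ref{thm:bimodal}), but the core idea is identical.
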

\begin{proof}
The bimodal property is defined entirely in terms of differences between elements in different sets of the collection.  The value of a difference $a_i-a_j$ does not change if $g$ is added to both $a_i$ and $a_j$, and hence the result follows immediately.
\end{proof}

We may also replace a coset by a partition of that coset into smaller cosets, a process we will refer to as {\em subdivision}:

%NOTE: technically, the $H_i^j$'s in the statement (and also the $B_i^t$'s in the proof) weren't defined.  I've defined them directly now, but do we wish to revist the possibility of re-introducing notation in which $H_i$ and $B_i$ have $A_i$ as an argument, more like the $\langle I(A_i) \rangle$ notation, or similar?

\begin{theorem}\label{thm:subdivision}
Let ${\cal A}=\{A_1,A_2,\dotsc,A_m\}$ be a bimodal collection of disjoint subsets of an abelian group $G$.  Suppose that $A_i$ is a coset of $H_i$ for some $i$.  Let $A_i^1,A_i^2, \dotsc, A_i^r$ be disjoint subsets partitioning $A_i$, having internal difference groups $H_i^1,H_i^2, \dotsc, H_i^r$ respectively, and which possess the property that for $j=1,2,\dotsc, r$ the set $A_i^j$ is a coset of $H_i^j$.  Then the collection ${\cal A^{\prime}}=\{A_1,A_2,\dotsc,A_{i-1},A_i^1,A_i^2,\dotsc,A_i^r ,A_{i+1},\dotsc,A_m\}$ satisfies the bimodal property.  We shall refer to $\cal A^{\prime}$ as a \emph{subdivision} of ${\cal A}$.
\end{theorem}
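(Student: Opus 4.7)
The plan is to apply the characterisation in Theorem~\ref{thm:bimodal} to the new collection $\mathcal{A}'$. Write $A=\bigcup_{k=1}^m A_k$; since subdivision only replaces $A_i$ by a partition of itself, the underlying union is unchanged. For each member $X$ of $\mathcal{A}'$ I will write $B'_X = A\setminus X$ and show $B'_X$ is a union of cosets of the internal difference group of $X$.

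First I would handle the easy sets: for $\ell\neq i$, the internal difference group of $A_\ell$ in $\mathcal{A}'$ is still $H_\ell$, and $B'_{A_\ell} = A\setminus A_\ell = B_\ell$, which is a union of cosets of $H_\ell$ by the bimodality of $\mathcal{A}$ together with Theorem~\ref{thm:bimodal}. So these cases follow immediately.

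The substantive case is to check the sets $A_i^j$. Fix $j$ and set $B'_{A_i^j} = B_i \sqcup \bigsqcup_{k\neq j} A_i^k$. I first record that $H_i^j\leq H_i$: every generator $x-y$ of $H_i^j$ has $x,y\in A_i^j\subseteq A_i$, and $A_i$ is a coset of $H_i$, so $x-y\in H_i$. I will then verify the coset-union condition by showing $B'_{A_i^j}$ is closed under translation by elements of $H_i^j$. Pick any $v\in B'_{A_i^j}$ and $h\in H_i^j$, and split on which part $v$ lies in:
\begin{itemize}
\item If $v\in B_i$, then because $B_i$ is a union of cosets of $H_i$ (by bimodality of $\mathcal{A}$) and $H_i^j\leq H_i$, the element $v+h$ lies in $B_i\subseteq B'_{A_i^j}$.
\item If $v\in A_i^k$ for some $k\neq j$, then $v\in A_i=a_i+H_i$ and $h\in H_i$, so $v+h\in A_i = \bigsqcup_{\ell=1}^r A_i^\ell$. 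It remains to rule out $v+h\in A_i^j$. But $A_i^j$ is a coset of $H_i^j$, hence invariant under translation by $H_i^j$, so $v+h\in A_i^j$ would force $v\in A_i^j$, contradicting $v\in A_i^k$ with $k\neq j$. Therefore $v+h\in \bigsqcup_{\ell\neq j}A_i^\ell\subseteq B'_{A_i^j}$.
\end{itemize}

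Either way $v+h\in B'_{A_i^j}$, so $B'_{A_i^j}$ is a union of cosets of $H_i^j$, and Theorem~\ref{thm:bimodal} yields bimodality of $\mathcal{A}'$. The only delicate point is the second bullet, where one must use both that the translate $v+h$ cannot escape $A_i$ (which needs $H_i^j\leq H_i$) and that it cannot fall into $A_i^j$ (which needs $A_i^j$ to be a coset of $H_i^j$ rather than an arbitrary subset with internal difference group $H_i^j$)---this is precisely why the hypothesis that each $A_i^j$ is a coset is essential.
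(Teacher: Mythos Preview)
Your argument is correct and follows essentially the same route as the paper: both verify Theorem~\ref{thm:bimodal} for $\mathcal{A}'$ by noting the union is unchanged (handling the sets $A_\ell$ with $\ell\neq i$), observing $H_i^j\leq H_i$, and decomposing $B'_{A_i^j}=B_i\sqcup(A_i\setminus A_i^j)$ to check each piece is a union of cosets of $H_i^j$. The only cosmetic difference is that you verify closure of $B'_{A_i^j}$ under $H_i^j$-translation elementwise, whereas the paper argues directly that $A_i$ is a union of cosets of $H_i^j$ (being a coset of the overgroup $H_i$) and that removing the single coset $A_i^j$ leaves a union of cosets.
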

\begin{proof}
The union of elements in the collection is not changed by subdivision, so for sets $A_j$ with $j\neq i$ it is still the case that $B_j$ is a union of cosets of $H_j$.  Consider a set $A_i^t$; we will denote $A \setminus A_i^t$ by $B_i^t$.   We note that $H_i^t\leq H_i$, and so any coset of $H_i$ is a union of cosets of $H_i^t$.  Now, $B_i^t=(A_i\setminus A_i^t) \cup B_i$.  We have that $B_i$ is a union of cosets of $H_i^t$ since it is a union of cosets of $H_i$, and $A_i\setminus A_i^t$ is a union of cosets of $H_i^t$ as $A_i$ is a union of cosets of $H_i$ and $A_i^t$ is itself a coset of $H_i^t$.  Thus we deduce that the collection is bimodal as required.
\end{proof}
We observe that subdivision can be applied repeatedly if desired, and the resulting collection will still be bimodal.

\begin{example}
Let $G$ be the elementary abelian group of order $8$.  Write it as $G=\mathbb{Z}_2 \times \mathbb{Z}_2 \times \mathbb{Z}_2, +)$.  Then by Example \ref{ex:trivialbimodal}, ${\cal A}=\{ G\}$ is trivially bimodal.  We can apply subdivision to obtain ${\cal A^{\prime}}$, as follows. 
Define the following subgroups of $G$:
$$ S_1=\{ (0,0,0), (0,0,1)\};$$
$$ S_2= \{ (0,0,0), (1,0,0)\};$$
$$ S_3=\{ (0,0,0), (0,1,1)\};$$
$$ S_4=\{ (0,0,0), (1,1,0) \}.$$
Then $G$ may be partitioned as:
$$A_1=S_1=\{ (0,0,0), (0,0,1)\};$$
$$A_2=(0,1,0)+S_2=\{ (0,1,0), (1,1,0)\};$$
$$A_3=(1,0,0)+S_3=\{ (1,0,0), (1,1,1)\};$$
$$A_4=(0,1,1)+S_4=\{ (0,1,1), (1,0,1)\}.$$  
Take ${\cal A^{\prime}}=\{ A_1, A_2, A_3, A_4\}$.  This is bimodal by Theorem \ref{thm:subdivision}.  To see this directly, observe that the differences out of $A_1$ comprise every element $2$ times except the elements of $S_1$ which occur zero times; and in general the differences out of $A_i$  comprise every element twice except the elements of $S_i$ which occur $0$ times.  This means that, for $\delta \in G^*$, if $\delta$ is one of the four non-zero elements of $\cup_{1 \leq i \leq 4}S_i$ then $N_i(\delta)=0$ for precisely one value of $i \in \{1,2,3,4\}$, whereas if $\delta$ is one of the three elements of $G^*$ not in $\cup_{1 \leq i \leq 4}S_i$, then $N_i(\delta)>0$ for all $i$.
\end{example}

\section{Classification of bimodal collections of sets}\label{sec:classification}
We will now develop a complete characterisation of bimodal collections of sets, through a series of results that classify them according to the relationship between their sets $A_i$ and the corresponding internal difference groups $H_i$. 

\begin{remark}
 For a bimodal collection ${\cal A}=\{A_1,A_2,\dotsc,A_m\}$ of disjoint subsets of an abelian group $G$ we have
 $|A_i|\leq |H_i|$ for each $i$.  Without loss of generality, we suppose that the sets are labelled so that for $i=1,2,\dots,r_{\!_{\cal A}}$ we have $|A_i|<|H_i|$, and for $i=r_{\!_{\cal A}}+1,r_{\!_{\cal A}}+2,\dotsc, m$ we have $|A_i|=|H_i|$, i.e. $A_i$ fills the coset $a_i+H_i$ for $i>r_{\!_{\cal A}}$.
\end{remark}

The following technical lemma will be useful in what follows.

\begin{lemma}\label{lem:excitinglemma}
Let $\cal A$ be a collection $A_1,A_2,\dotsc,A_m$ of disjoint subsets of $G$ that has the bimodal property.  Then $A_k \cap (a_j+H_j) = \emptyset$ for any $k\neq j$.
\end{lemma}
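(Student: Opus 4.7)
The plan is to argue by contradiction, leveraging the coset-union characterisation of bimodality from Theorem~\ref{thm:bimodal} (as reformulated in Remark~\ref{remark2.4}).

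Suppose for contradiction that some $x$ lies in $A_k\cap(a_j+H_j)$ with $k\neq j$. First I would note that $x\in a_j+H_j$ implies the equality of cosets $x+H_j=a_j+H_j$. On the other hand, since $k\neq j$, the element $x$ belongs to $A_k\subseteq B_j$.

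Next I would invoke Remark~\ref{remark2.4}: because $x\in B_j$ and $B_j$ is a union of cosets of $H_j$, the entire coset $x+H_j$ is contained in $B_j$. Combining this with the coset identification from the previous step gives $a_j+H_j\subseteq B_j$. But $a_j\in A_j$ (by choice of coset representative), and $A_j\cap B_j=\emptyset$ by definition of $B_j$, yielding the required contradiction.

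I do not anticipate any real obstacle here: the lemma is essentially a direct unpacking of the bimodality characterisation together with the elementary fact that two cosets either coincide or are disjoint. The only thing to be careful about is ensuring the representative $a_j$ is genuinely in $A_j$, which is guaranteed by the convention fixed earlier in the paper.
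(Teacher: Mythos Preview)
Your proof is correct and uses exactly the same ingredients as the paper's argument: $B_j$ is a union of cosets of $H_j$, $A_k\subseteq B_j$, and $a_j\in A_j$ so the coset $a_j+H_j$ cannot be one of those cosets. The only difference is cosmetic: you phrase it as a contradiction, while the paper argues directly that $B_j$, being a union of $H_j$-cosets not containing $a_j$, omits the coset $a_j+H_j$, whence $A_k\subseteq B_j$ misses $a_j+H_j$ as well.
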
 
\begin{proof}
$B_j$ is a union of cosets of $H_j$, which does not include the coset $a_j+H_j$.  For $k\neq j$, we have $A_k\subseteq B_j$, and so $A_k\cap (a_j+H_j)=\emptyset$. 
\end{proof}
In other words, not only is $A_k$ disjoint from $A_j$ when $k\neq j$, it is also disjoint from the entire coset of $H_j$ that contains $A_j$.

\subsection{The case when $r_{\!_{\cal A}} \geq 2$ }
We will consider three cases: the situation when there are at least two sets $A_i$ that do not fill $a_i+H_i$ (i.e. $r_{\!_{\cal A}} \geq 2$), the case when precisely one $A_i$ does not fill $a_i+H_i$ (i.e. $r_{\!_{\cal A}}=1$), and the case when each of the sets in ${\cal A}$ fills the coset of $H_i$ that contains it ($r_{\!_{\cal A}}=0$).

\begin{lemma}\label{lem:notsubgroups}
Let ${\cal A}=\{A_1,A_2,\dotsc,A_m\}$ be a bimodal collection of disjoint subsets of an abelian group $G$ with $r_{\!_{\cal A}}\geq 2$.  Then for $i,j\leq r_{\!_{\cal A}}$ with $i\neq j$ we have that $H_i$ is not a subgroup of $H_j$ and $H_j$ is not a subgroup of $H_i$.
\end{lemma}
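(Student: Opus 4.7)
The plan is to argue by contradiction, assuming (say) $H_i \leq H_j$; the reverse containment follows by symmetry, since the hypothesis $i,j \leq r_{\!_{\cal A}}$ is symmetric in $i$ and $j$. The strategy is to pin down a single element $v \in G$ that the hypothesis forces both into and out of $A=\bigcup_k A_k$, giving the contradiction.

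First I will use the hypothesis $|A_i|<|H_i|$ to pick a witness $v \in (a_i+H_i)\setminus A_i$, which exists precisely because $A_i$ does not fill its ambient coset. Applying Lemma~\ref{lem:excitinglemma} with its ``$j$'' set to our $i$, the set $A_k$ is disjoint from $a_i+H_i$ for every $k\neq i$; combined with the choice $v\notin A_i$, this forces $v\notin A$.

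Next I will use the assumption $H_i\leq H_j$ to show that $v$ must also lie in $A$. Under this assumption, $a_i+H_i \subseteq a_i+H_j$, so $A_i$ is contained in the single $H_j$-coset $a_i+H_j$. Fix any $x\in A_i$; then $x\in B_j$ since $i\neq j$, and Remark~\ref{remark2.4} gives $x+H_j \subseteq B_j$. Because $x+H_j = a_i+H_j$, the entire coset $a_i+H_j$ is inside $B_j \subseteq A$, and since $v\in a_i+H_i \subseteq a_i+H_j$ we conclude $v\in A$, contradicting the previous step.

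I do not expect a real obstacle here: the coset-union characterisation of bimodality (Theorem~\ref{thm:bimodal}, used through Remark~\ref{remark2.4}) and Lemma~\ref{lem:excitinglemma} are tailored exactly for this kind of witness argument. The only minor point worth flagging is the symmetric use of the hypothesis $r_{\!_{\cal A}}\geq 2$: both $|A_i|<|H_i|$ and $|A_j|<|H_j|$ hold, so swapping the roles of $i$ and $j$ in the above argument rules out $H_j\leq H_i$ as well.
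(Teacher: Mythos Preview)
Your proof is correct and follows essentially the same approach as the paper: pick a witness $v\in (a_i+H_i)\setminus A_i$, use Lemma~\ref{lem:excitinglemma} to see $v\notin A$, and then use $H_i\leq H_j$ together with the coset-union characterisation of bimodality to force $v\in B_j\subseteq A$. The paper phrases the contradiction as ``$B_j$ is not a union of cosets of $H_j$'' rather than ``$v\in A$ and $v\notin A$'', but this is the same argument.
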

\begin{proof}
Let ${\cal A}=\{A_1,A_2,\dotsc,A_m\}$ be a bimodal collection of disjoint subsets of an abelian group $G$ with $r_{\!_{\cal A}}\geq 2$, and suppose that for some $i,j\leq r_{\!_{\cal A}}$ with $i\neq j$ we have $H_i\leq H_j$.   Since $|A_i|<|H_i|$ there are elements of $a_i+H_i\subseteq a_i+H_j$ that are not contained in $B_j$, and by Lemma~\ref{lem:excitinglemma} these elements are not contained in any other set in $\cal A$.  It follows that $B_j$ is not a union of cosets of $H_j$, and so $\cal A$ is not bimodal.
\end{proof}

\begin{proposition}\label{prop:itsastar}
Let ${\cal A}=\{A_1,A_2,\dotsc,A_m\}$ be a bimodal collection of disjoint subsets of an abelian group $G$ with $r_{\!_{\cal A}}\geq 2$.  Then there exists a nonempty subset $D_{\!_{\cal A}}\subseteq G$ such that:
\begin{enumerate}
\item $a_i+H_i=A_i\sqcup D_{\!_{\cal A}}$ for $i\leq r_{\!_{\cal A}}$;
\item  $(a_i+H_i)\cap (a_j+H_j)=D_{\!_{\cal A}}$ for any $i,j\in 1,2,\dotsc, r_{\!_{\cal A}}$ with $i\neq j$;
\item  $D_{\!_{\cal A}}$ is itself a coset of a subgroup of $G$.
\end{enumerate}
\end{proposition}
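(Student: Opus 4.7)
The plan is to show that $D_i := (a_i+H_i)\setminus A_i$ is the same set for every $i\leq r_{\!_{\cal A}}$ and serves as $D_{\!_{\cal A}}$. First I would invoke Theorem~\ref{thm:bimodal}: $B_i$ is a union of cosets of $H_i$, so the coset $a_i+H_i$ cannot meet $B_i$ (it contains $a_i\in A_i$, not in $B_i$), whence $(a_i+H_i)\cap A=A_i$ and $D_i\subseteq G\setminus A$. Since $|A_i|<|H_i|$ for $i\leq r_{\!_{\cal A}}$, $D_i$ is nonempty.

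The core technique is a chain argument inside the coset $a_j+(H_i+H_j)$ of the subgroup $H_i+H_j$, for a fixed pair $i,j\leq r_{\!_{\cal A}}$ with $i\neq j$. Since $A_j\subseteq B_i$ and $B_i$ is a union of cosets of $H_i$, we have $A_j+H_i\subseteq A$. Every $H_j$-coset lying inside $a_j+(H_i+H_j)$ other than $a_j+H_j$ contains some $a+h$ with $a\in A_j$ and $h\in H_i$ (so it meets $A$), so by Theorem~\ref{thm:bimodal} applied to $H_j$ the entire $H_j$-coset lies in $B_j\subseteq A$. Consequently $(a_j+H_i+H_j)\setminus(a_j+H_j)\subseteq A$, and therefore $(a_j+H_i+H_j)\cap(G\setminus A)=D_j$.

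The main obstacle is to rule out the possibility $(a_i+H_i)\cap(a_j+H_j)=\emptyset$. Under that hypothesis $a_i+H_i$ lies in a different coset of $H_i+H_j$, so any $w\in A\cap(a_j+H_i+H_j)$ satisfies $w+H_i\neq a_i+H_i$, and Theorem~\ref{thm:bimodal} applied to $H_i$ then forces $w+H_i\subseteq B_i\subseteq A$. Passing to the quotient by $H_i\cap H_j$, where the images of $H_i$ and $H_j$ are complementary, one checks that every $H_i$-coset so forced into $A$ meets $a_j+H_j$ in a coset of $H_i\cap H_j$, and that as $w$ ranges over the $H_j$-cosets outside $a_j+H_j$ these intersections exhaust $a_j+H_j$. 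This forces $A_j=a_j+H_j$, contradicting $j\leq r_{\!_{\cal A}}$.

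So $C:=(a_i+H_i)\cap(a_j+H_j)$ is a nonempty coset of $H_i\cap H_j$, and $C\cap A\subseteq A_i\cap A_j=\emptyset$ places $C$ inside $G\setminus A$ and hence inside $D_i\cap D_j$. Since the intersection is nonempty, $a_i+(H_i+H_j)=a_j+(H_i+H_j)$, so repeating the chain argument with the roles of $i$ and $j$ interchanged (using cosets of $H_i$ now) gives $(a_i+H_i+H_j)\cap(G\setminus A)=D_i$; comparing with the expression for $D_j$ yields $D_i=D_j$. This common set is contained in both $a_i+H_i$ and $a_j+H_j$ and contains $C$, so it equals $C$. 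Setting $D_{\!_{\cal A}}:=D_i$ delivers all three conclusions at once, with item~(3) following from $D_{\!_{\cal A}}=C$ being a coset of $H_i\cap H_j$.
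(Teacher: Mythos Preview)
Your argument is essentially correct and reaches the same conclusion, but by a noticeably longer route than the paper. The paper's proof defines $D_{\!_{\cal A}}=(a_1+H_1)\setminus A_1$ and, for each $i\leq r_{\!_{\cal A}}$, directly produces for any $v\in D_{\!_{\cal A}}$ an element $v+h_i\in A_i$ (where $h_i\in H_i\setminus H_1$ exists by Lemma~\ref{lem:notsubgroups}), forcing $v\in a_i+H_i$; there is no case analysis and no need to work inside $H_i+H_j$. Your approach instead establishes the global statement $(a_j+H_i+H_j)\cap(G\setminus A)=D_j$ and then compares it with its $i$-analogue, which is conceptually appealing but requires the extra ``main obstacle'' step to place $a_i$ and $a_j$ in the same coset of $H_i+H_j$.

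There is one genuine gap in that step. Your exhaustion argument---that the $H_i$-cosets through the $H_j$-cosets outside $a_j+H_j$ cover all of $a_j+H_j$---tacitly assumes there \emph{are} such $H_j$-cosets inside $a_j+(H_i+H_j)$, equivalently that $H_i\not\leq H_j$. (``Complementary images modulo $H_i\cap H_j$'' does not rule out one image being trivial.) This is exactly the content of Lemma~\ref{lem:notsubgroups}, and it also follows from your own first paragraph: if $H_i\leq H_j$ then $a_i\in B_j$ gives $a_i+H_i\subseteq a_i+H_j\subseteq B_j\subseteq A$, contradicting $\emptyset\neq D_i\subseteq G\setminus A$. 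Once you insert that one line, the proof is complete; but note that the paper's direct element-chase avoids the case split altogether and is shorter.
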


\begin{proof}
\begin{enumerate}
\item \label{asdf} Let $D_{\!_{\cal A}}=(a_1+H_1)\setminus A_1$.  We will show that $D_{\!_{\cal A}}=(a_i+H_i)\setminus A_i$ for each $i$ with $1\leq i\leq r_{\!_{\cal A}}$.  Let $v\in D_{\!_{\cal A}}$.  We observe that $v\notin A$, by Lemma~\ref{lem:excitinglemma}; in particular, $v\notin A_i$ for any $i\neq 1$.  For $i$ with $2\leq i\leq r_{\!_{\cal A}}$ there exists $h_i\in H_i$ with $h_i\notin H_1$, by Lemma~\ref{lem:notsubgroups}.  As $a_1\in B_i$, we have that $a_1+h_i\in B_i$.  But $a_1+h_i\notin A_1$, so we also have $a_1+h_i+h_1\in B_1$ for any $h_1\in H_1$.  Taking $h_1=v-a_1$, we deduce that $v+h_i\in B_1$.  

We claim that $v+h_i\in A_i$.  For, if it were in $B_i$ this would imply that $v\in B_i$, which contradicts the fact that $v\notin A$.  In turn, this implies $v\in a_i+H_i$. As $v\notin A$ we conclude that $v\in (a_i+H_i)\setminus A_i$.  This shows that $D_{\!_{\cal A}}\subseteq(a_i+H_i)\setminus A_i$; repeating the above argument starting with $v^\prime\in (a_i+H_i)\setminus A_i$ allows us to show that $(a_i+H_i)\setminus A_i\subseteq D_{\!_{\cal A}}$.

\item Follows immediately from \ref{asdf}.
\item The set $D_{\!_{\cal A}}$ is precisely the intersection of all cosets $a_i+H_i$ with $i\leq r_{\!_{\cal A}}$.  This implies that $D_{\!_{\cal A}}$ is a coset of the subgroup obtained by taking the intersection of $H_i$ for all $i\leq r_{\!_{\cal A}}$.
\end{enumerate}
\end{proof}
A collection of sets $F_1,F_2,\dotsc F_k$ with the property that $F_i\cap F_j=D$ for all $i\neq j$ is said to be a {\em $k$-star with kernel $D$} \cite{ErdosRado,furedi}.  Using this terminology, Proposition~\ref{prop:itsastar} shows that for $i$ with $1\leq i \leq r_{\!_{\cal A}}$ the collection of cosets $a_i+H_i$ form an $r_{\!_{\cal A}}$-star with kernel $D_{\!_{\cal A}}$.

By Lemma~\ref{lemma:youcanshift}, a bimodal collection of sets may undergo a shift without affecting its bimodality and so we can choose to shift a collection $\cal A$ with $r_{\!_{\cal A}} \geq 2$ by an element of $D_{\!_{\cal A}}$.  This will allow us to assume, without loss of generality, that $D_{\!_{\cal A}}$ is in fact a subgroup of $G$.

\begin{definition}
A bimodal collection ${\cal A}=\{A_1,A_2,\dotsc,A_m\}$ of disjoint subsets of an abelian group $G$ with $r_{\!_{\cal A}}\geq 2$ will be said to be \emph{in canonical position} if it has been shifted so that $D_{\!_{\cal A}}$ is  a subgroup of $G$.
\end{definition}
\begin{remark}
We observe that if ${\cal A}=\{A_1,A_2,\dotsc,A_m\}$ is a bimodal collection of subsets with $r_{\!_{\cal A}}\geq 2$ in canonical position we have
\begin{itemize}
\item $D_{\!_{\cal A}}$ is the subgroup $\cap_{i=1}^{r_{\!_{\cal A}}} H_i$;
\item for $i=1,2,\dotsc,r_{\!_{\cal A}}$ we have that $A_i=H_i\setminus D_{\!_{\cal A}}$;
\item the subgroups $H_1,H_2,\dotsc, H_{r_{\!_{\cal A}}}$ form an $r_{\!_{\cal A}}$-star with kernel $D_{\!_{\cal A}}$.
 \end{itemize}
\end{remark}
We have thus seen that the sets $A_1$ to $A_{r_{\!_{\cal A}}}$ of a bimodal collection with $r_{\!_{\cal A}}\geq 2$ occur together in a very structured way.  In fact, we will see that these sets also impose considerable structure on the remaining members of $\cal A$.
\begin{proposition}
Let ${\cal A}=\{A_1,A_2,\dotsc,A_m\}$ be a bimodal collection of disjoint subsets of an abelian group $G$ with $r_{\!_{\cal A}}\geq 2$, in canonical position. Then 
\begin{equation*}(H_1+H_2+\dotsb+H_{r_{\!_{\cal A}}}) \setminus D_{\!_{\cal A}}
\end{equation*}
is contained in $A=\cup_{i=1}^m A_i$.
\end{proposition}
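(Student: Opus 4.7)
The plan is to prove the stronger statement by induction on $k$: for every $k \in \{1,2,\dotsc,r_{\!_{\cal A}}\}$, every element of $(H_1+H_2+\dotsb+H_k)\setminus D_{\!_{\cal A}}$ lies in $A$. The base case $k=1$ is immediate, since in canonical position $A_1 = H_1\setminus D_{\!_{\cal A}}$, so $H_1\setminus D_{\!_{\cal A}}\subseteq A$.

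For the inductive step, I would write a generic element as $v = w + h_k$ with $w \in H_1+\dotsb+H_{k-1}$ and $h_k \in H_k$, and assume $v\notin D_{\!_{\cal A}}$. Split on whether $w \in D_{\!_{\cal A}}$. If $w \in D_{\!_{\cal A}}\leq H_k$, then $v \in H_k$; combined with $v\notin D_{\!_{\cal A}}$ and the canonical-position identity $A_k = H_k\setminus D_{\!_{\cal A}}$, this gives $v \in A_k\subseteq A$. If instead $w\notin D_{\!_{\cal A}}$, the inductive hypothesis places $w$ in some $A_j$. When $j\neq k$ we have $w \in B_k$, and Remark~\ref{remark2.4} yields $w + H_k\subseteq B_k$, so $v \in B_k\subseteq A$. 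When $j = k$ we have $w \in A_k\subseteq H_k$ (valid because $0 \in D_{\!_{\cal A}}\subseteq a_k+H_k$ in canonical position, forcing $a_k+H_k = H_k$), whence $v \in H_k$ and the same argument as the first case places $v$ in $A_k$.

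The step I expect to require the most care is handling the scenario in which the partial sum $w$ happens to fall into $D_{\!_{\cal A}}$, where the inductive hypothesis gives no information at all. The rescue is precisely the reason canonical position was set up: every $H_i$ contains the subgroup $D_{\!_{\cal A}}$, so such a $w$ can be harmlessly absorbed into $h_k$ and the resulting $v$ lands back inside $H_k$. Everything else then reduces to a single appeal to the coset union characterisation of bimodality provided by Theorem~\ref{thm:bimodal}.
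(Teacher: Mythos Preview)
Your proof is correct and follows essentially the same route as the paper's: both argue by building up the sum one $H_i$ at a time, splitting on whether the partial sum lies in $D_{\!_{\cal A}}$ (in which case it is absorbed into $H_k$ and lands in $A_k$), and otherwise invoking the coset-union characterisation of $B_k$ from Theorem~\ref{thm:bimodal}. The only cosmetic difference is that you frame this as an explicit induction with base case $k=1$, whereas the paper works out $k=2$ and $k=3$ by hand and then writes ``proceeding analogously.''
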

\begin{proof}
We begin by showing that $(H_1+H_2)\setminus D_{\!_{\cal A}} \subseteq A$.  Let $h\in (H_1+H_2)\setminus D_{\!_{\cal A}}$.  Then $h=h_1+h_2$ for some $h_1\in H_1$ and $h_2\in H_2$.  If $h_1\in D_{\!_{\cal A}}$ then $h_1+h_2\in H_2$ and hence in $A_2\subseteq A$ as $h_1+h_2\notin D_{\!_{\cal A}}$.  Otherwise, $h_1\in A_1\subseteq B_2$.  This implies that $h_1+h_2$ is also in $B_2\subseteq A$, by Remark~\ref{remark2.4}.

Now consider $h\in (H_1+H_2+H_3)\setminus D_{\!_{\cal A}}$, so $h=h_1+h_2+h_3$ with $h_1\in H_1$, $h_2\in H_2$ and $h_3\in H_3$.  Suppose $h_1+h_2\in D_{\!_{\cal A}}$.  Then $h_1+h_2+h_3\in H_3$ and hence $h_1+h_2+h_3\in A_3$ as $h_1+h_2+h_3\notin D_{\!_{\cal A}}$.  In the case where $h_1+h_2\notin D_{\!_{\cal A}}$, the above argument shows that $h_1+h_2\in A=A_3\cup B_3$.  Now, if $h_1+h_2\in B_3$ then by Remark~\ref{remark2.4} we have that $h_1+h_2+h_3\in B_3\subset A$.  If, however, $h_1+h_2\in A_3$, then $h_1+h_2+h_3\in H_3$ and hence $h_1+h_2+h_3\in A_3$.

Proceeding analogously for all $r_{\!_{\cal A}}$ summands yields the desired result.
\end{proof}
In some circumstances it may be the case that all elements of $(H_1+H_2+\dotsb+ H_{r_{\!_{\cal A}}})\setminus D_{\!_{\cal A}}$ are contained in the union of the sets $A_1,A_2,\dotsc,A_{r_{\!_{\cal A}}}$, but in general this will not be true, and there may be other sets $A_i$ with $i>r_{\!_{\cal A}}$  containing elements of $H_1+H_2+\dotsb +H_{r_{\!_{\cal A}}}$.  Furthermore, there may also be elements of $A$ that do not lie in $H_1+H_2+\dotsb + H_{r_{\!_{\cal A}}}$. The following proposition tells us more about the sets $A_i$ with $i>r_{{\!_{\cal A}}}$.
\begin{proposition}\label{prop:Hi<DA}
Let ${\cal A}=\{A_1,A_2,\dotsc,A_m\}$ be a bimodal collection of disjoint subsets of an abelian group $G$ with $r_{\!_{\cal A}}\geq 2$, in canonical position.  Then for $A_i$ with $i \geq r_{\!_{\cal A}}+1$, the group $H_i$ is a subgroup of $D_{\!_{\cal A}}$.
\end{proposition}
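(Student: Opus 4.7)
The plan is to argue by contradiction: assume some $h\in H_i\setminus D_{\!_{\cal A}}$ exists, and show that the coset $h+D_{\!_{\cal A}}$ would then lie simultaneously in $A$ and in $G\setminus A$, which is impossible. Throughout I work in canonical position, so $D_{\!_{\cal A}}$ is a subgroup, and I would first record that $D_{\!_{\cal A}}\cap A=\emptyset$: for $\ell\le r_{\!_{\cal A}}$ this is immediate from $A_\ell=H_\ell\setminus D_{\!_{\cal A}}$, and for $\ell>r_{\!_{\cal A}}$ it follows from Lemma~\ref{lem:excitinglemma} together with $D_{\!_{\cal A}}\subseteq H_j$ for any $j\le r_{\!_{\cal A}}$.

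For the ``outside'' inclusion $(H_i+D_{\!_{\cal A}})\cap A=\emptyset$, suppose $h+d\in A_\ell$ with $h\in H_i$ and $d\in D_{\!_{\cal A}}$. If $\ell=i$ then $h+d\in A_i=a_i+H_i$; subtracting $h\in H_i$ gives $d\in A_i\subseteq A$, contradicting $D_{\!_{\cal A}}\cap A=\emptyset$. If $\ell\neq i$ then $h+d\in B_i$, so Remark~\ref{remark2.4} gives $(h+d)+H_i\subseteq B_i$, and again subtracting $h$ yields $d\in B_i\subseteq A$, a contradiction. Hence $H_i+D_{\!_{\cal A}}\subseteq G\setminus A$.

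For the ``inside'' inclusion, suppose $h\in H_i$ with $h\notin H_{j_0}$ for some $j_0\le r_{\!_{\cal A}}$, and pick $v\in A_{j_0}$ (nonempty since $|A_{j_0}|=|H_{j_0}|-|D_{\!_{\cal A}}|>0$). Because $v\in B_i$, Remark~\ref{remark2.4} gives $v+h\in B_i\subseteq A$; moreover $v+h\notin H_{j_0}$ (else $h\in H_{j_0}$), so $v+h\in B_{j_0}$, and a second application of Remark~\ref{remark2.4} yields $(v+h)+H_{j_0}\subseteq B_{j_0}\subseteq A$. Since $D_{\!_{\cal A}}\subseteq H_{j_0}$ and $v\in H_{j_0}$, we have $D_{\!_{\cal A}}-v\subseteq H_{j_0}$, so $h+D_{\!_{\cal A}}=(v+h)+(D_{\!_{\cal A}}-v)\subseteq(v+h)+H_{j_0}\subseteq A$.

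Finally, if $H_i\not\le D_{\!_{\cal A}}$, then picking $h\in H_i\setminus D_{\!_{\cal A}}$ and using $D_{\!_{\cal A}}=\bigcap_{j\le r_{\!_{\cal A}}}H_j$ produces a $j_0$ with $h\notin H_{j_0}$, whence the two inclusions give $\emptyset\neq h+D_{\!_{\cal A}}\subseteq A\cap(G\setminus A)=\emptyset$, the desired contradiction. The main conceptual step I anticipate is spotting $h+D_{\!_{\cal A}}$ as the right object to trap on both sides; once identified, each of the two inclusions is just one or two invocations of Remark~\ref{remark2.4}.
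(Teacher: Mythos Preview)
Your argument is correct and is essentially the paper's own proof, reorganised. The paper also fixes $h_i\in H_i\setminus D_{\!_{\cal A}}$ and $j\le r_{\!_{\cal A}}$ with $h_i\notin H_j$, pushes $a_j$ through $B_i$ and then $B_j$ via Remark~\ref{remark2.4} to get $v+h_i\in A$ for $v\in D_{\!_{\cal A}}$, and then derives the contradiction from the observation that $A=A_i\sqcup B_i$ is a union of $H_i$-cosets while $v\notin A$; your ``outside'' inclusion is exactly this last observation unpacked, and your ``inside'' inclusion is the same $B_i\to B_{j_0}$ chain, so only the packaging (tracking the whole coset $h+D_{\!_{\cal A}}$ rather than a single point) differs.
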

\begin{proof}
Let ${\cal A}=\{A_1,A_2,\dotsc,A_m\}$ be a bimodal collection of disjoint subsets of an abelian group $G$ with $r_{\!_{\cal A}}\geq 2$, in canonical position, and suppose there exists $i> r_{\!_{\cal A}}$ for which $H_i$ is not a subgroup of $D_{\!_{\cal A}}$.  Then there exists $h_i\in H_i$ with $h_i\notin D_{\!_{\cal A}}$.  We observe that there exists $j\leq r_{\!_{\cal A}}$ with $h_i\notin H_j$, as the only elements common to all of the groups $H_t$ with $t\leq r_{\!_{\cal A}}$ are those of $D_{\!_{\cal A}}$. As $a_j\in B_i$ we thus have $a_j+h_i\in B_i$, and furthermore $a_j+h_i\in B_j$.  Let $v\in D_{\!_{\cal A}}$.  Then $v-a_j\in H_j$, and thus $a_j+h_i+(v-a_j)=v+h_i\in B_j$.  

Now, $A=A_i\cup B_i$, and $B_i$ is a union of cosets of $H_i$.  Since $|A_i|=|H_i|$, this implies that in fact $A$ is a union of cosets of $H_i$.  However, we have just shown that $v+h_i\in A$.  As we know that $v\notin A$ (since $v\in D_{\!_{\cal A}}$,) this leads to a contradiction.
\end{proof}

This implies that each set $A_i$ with $i> r_{\!_{\cal A}}$ is a coset of a subgroup of $D_{\!_{\cal A}}$, and hence is contained in a coset of $D_{\!_{\cal A}}$.  Hence each such set is either wholly contained within $H_1+H_2+\dotsb +H_{r_{\!_{\cal A}}}\setminus D_{\!_{\cal A}}$, or else lies completely outside $H_1+H_2+\dotsb + H_{r_{\!_{\cal A}}}$. Thus we now know that the set $H_1+H_2+\dotsb +H_{r_{\!_{\cal A}}}\setminus D_{\!_{\cal A}}$ is entirely partitioned by sets from $\cal A$.   We also know that any $A_i$ occuring outside of this set is a coset of a subgroup of $D_{\!_{\cal A}}$.  The following proposition characterises the union of these $A_i$.

\begin{proposition}
Let ${\cal A}=\{A_1,A_2,\dotsc,A_m\}$ be a bimodal collection of disjoint subsets of an abelian group $G$ with $r_{\!_{\cal A}}\geq 2$, in canonical position. Let $H$ be the group $H=H_1+H_2+\dotsb+H_{r_{\!_{\cal A}}}$.  If $A\setminus H$ is nonempty, then it consists of a union of cosets of $H$; further, the sets $A_i$ which lie in $A \setminus H$ arise from a subdivision of these cosets of $H$.
\end{proposition}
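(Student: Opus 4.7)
The plan is to prove this in two stages: first that $A\setminus H$ is a union of cosets of $H$, and then that the sets $A_i$ lying in $A\setminus H$ constitute a valid subdivision of those cosets.

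For the first stage, I would pick an arbitrary $a\in A\setminus H$ and show $a+H\subseteq A\setminus H$ by induction on the number of summands drawn from the $H_i$. The base observation is that $a\notin H$ forces $a\notin H_i$ for every $i\leq r_{\!_{\cal A}}$, since $H_i\subseteq H$; using canonical position where $A_i=H_i\setminus D_{\!_{\cal A}}\subseteq H_i$, this gives $a\notin A_i$ and hence $a\in B_i$. By Remark~\ref{remark2.4} we then have $a+h_i\in B_i\subseteq A$ for every $h_i\in H_i$. Inductively, if $a'=a+h_1+\dotsb+h_k\in A\setminus H$, then the same reasoning applied to $a'$ and $H_{k+1}$ yields $a'+h_{k+1}\in B_{k+1}\subseteq A$, and $a'+h_{k+1}\notin H$ (else $a'\in H$ and hence $a\in H$). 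Since $H=H_1+\dotsb+H_{r_{\!_{\cal A}}}$, iterating to $k=r_{\!_{\cal A}}$ shows $a+H\subseteq A\setminus H$, so $A\setminus H$ is a union of cosets of $H$.

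For the second stage, Proposition~\ref{prop:Hi<DA} gives $H_i\leq D_{\!_{\cal A}}\leq H$ for every $i>r_{\!_{\cal A}}$, and since $|A_i|=|H_i|$ in this range each such $A_i$ is a full coset of the subgroup $H_i$ of $H$; in particular it lies in a single coset of $H$. The sets $A_i$ with $i\leq r_{\!_{\cal A}}$ are contained in $H$ (canonical position), so the only sets meeting $A\setminus H$ are those $A_i$ with $i>r_{\!_{\cal A}}$ lying outside $H$. These sets are disjoint and, by the first stage, they exactly cover the cosets of $H$ that make up $A\setminus H$. Each is a coset of its own internal difference group $H_i$, a subgroup of $H$, so the $A_i$'s partition each such coset of $H$ in precisely the way that Theorem~\ref{thm:subdivision} calls a subdivision.

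The main obstacle is keeping the invariant $a'\notin H$ alive throughout the induction, because that is what lets Remark~\ref{remark2.4} be reapplied at each step to push forward by an element of $H_{k+1}$. This in turn depends on two simple but essential ingredients: canonical position, which guarantees $A_i\subseteq H_i$, and the obvious containment $H_i\leq H$. Provided the invariant is tracked carefully, the remainder of the argument is bookkeeping built on Proposition~\ref{prop:Hi<DA} and the definition of subdivision.
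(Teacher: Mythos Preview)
Your proof is correct and follows essentially the same approach as the paper's. Both arguments show $A\setminus H$ is $H$-invariant by pushing an element $a\in A\setminus H$ through the summands $h_1,\dotsc,h_{r_{\!_{\cal A}}}$ one at a time, using at each step that the current element lies outside $H$ (hence outside $A_i\subseteq H_i$) to place it in $B_i$ and apply Remark~\ref{remark2.4}; and both invoke Proposition~\ref{prop:Hi<DA} to conclude that the remaining $A_i$ are cosets of subgroups of $H$ and hence give a subdivision. Your write-up is a bit more explicit about maintaining the invariant $a'\notin H$ and about why the resulting partition matches the definition of subdivision, but there is no substantive difference in strategy.
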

\begin{proof}
We must show that if there is an element $x\in A\setminus H$ then $x+h\in A$ for all $h\in H$.  First suppose $h\in H_i$ for some $i\leq r_{\!_{\cal A}}$.  As $x\in A\setminus H$ we have $x\in B_i$, and so $x+h\in B_i$. Otherwise, $h$ has the form $h_1+h_2+\dotsb+h_{r_{\!_{\cal A}}}$ with $h_1\in H_1,\dotsc,h_{r_{\!_{\cal A}}}\in H_{r_{{\!_{\cal A}}}}$.  But $x\in B_1$ so $(x+h_1)\in B_1\subset A$.  Furthermore, as $x\notin H$ we have $x+h_1\notin H$.  Hence $x+h_1\in B_2\subset A$, from which we deduce $x+h_1+h_2\in B_2$.  Continuing in this manner we conclude that $x+h_1+h_2+\dotsb+h_{r_{\!_{\cal A}}}\in B_{r_{\!_{\cal A}}}\subset A$ as required.  So $A \setminus H$ is a union of cosets of $H$.
Since by Proposition \ref{prop:Hi<DA},  we have that $H_i \leq D_{\!_{\cal A}} \leq H$ for all $i \geq r_{\!_{\cal A}}+1$,  each $A_i$ in $A \setminus H$ must be wholly contained in a single coset of $H$.
\end{proof}

%NOTE: for the end of this proof it might be slicker to have the $i>k$ notation already in place?  But not a real problem.

The following theorem fully summarises the structure that has been determined in the above propositions.

\begin{theorem}\label{thm:properties}
Let ${\cal A}=\{A_1,A_2,\dotsc,A_m\}$ be a bimodal collection of disjoint subsets of an abelian group $G$ with $r_{\!_{\cal A}}\geq 2$, in canonical position. Then
\begin{enumerate}
\item The internal difference groups $H_1,H_2,\dotsc,H_{r_{{\!_{\cal A}}}}$ form an $r_{{\!_{\cal A}}}$-star with kernel $D_{{\!_{\cal A}}}$, and for each $i$ with $1\leq i \leq r_{{\!_{\cal A}}}$ we have $A_i=H_i\setminus D_{{\!_{\cal A}}}$.
\item Any set $A_i$ with $i> r_{{\!_{\cal A}}}$ is a coset of a subgroup of $D_{{\!_{\cal A}}}$.
\item If $H$ denotes the group $H_1+H_2+\dotsb+H_{r_{{\!_{\cal A}}}}$, then $H\setminus D_{{\!_{\cal A}}}$ is contained in $A$.  Furthermore, the sets in $\cal A$ can be labelled such that for some $k$ with $r_{{\!_{\cal A}}}\leq k\leq m$ we have that $H\setminus D_{{\!_{\cal A}}}$ is partitioned by $A_1,A_2,\dotsc,A_{k}$.
\item If $k<m$ then the sets $A_i$ with $i>k$ arise from a subdivision of cosets of $H$.
\end{enumerate}
\end{theorem}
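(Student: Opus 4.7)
The plan is to assemble this summary theorem from the four preceding propositions of the subsection, with only minor bookkeeping needed to stitch them together. Part~1 is essentially a restatement of Proposition~\ref{prop:itsastar} in canonical position: the star statement gives the $r_{\!_{\cal A}}$-star with kernel $D_{\!_{\cal A}}$, and since $D_{\!_{\cal A}}$ is now a subgroup I may take $a_i\in H_i$, so that $a_i+H_i=H_i$ and item~1 of that proposition reads $H_i=A_i\sqcup D_{\!_{\cal A}}$, i.e.\ $A_i=H_i\setminus D_{\!_{\cal A}}$. Part~2 combines Proposition~\ref{prop:Hi<DA}, which gives $H_i\leq D_{\!_{\cal A}}$ for $i>r_{\!_{\cal A}}$, with the definition of $r_{\!_{\cal A}}$: such an $A_i$ fills the coset $a_i+H_i$, so it is itself a coset of $H_i$, and $H_i$ is a subgroup of $D_{\!_{\cal A}}$.

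For part~3, the inclusion $H\setminus D_{\!_{\cal A}}\subseteq A$ is precisely the content of the earlier proposition showing $(H_1+\dotsb+H_{r_{\!_{\cal A}}})\setminus D_{\!_{\cal A}}\subseteq A$. The partition claim requires showing that every $A_j$ meeting $H\setminus D_{\!_{\cal A}}$ is wholly contained in it; listing these $A_j$ first then provides the required labelling. For $j\leq r_{\!_{\cal A}}$ the containment is immediate from part~1. For $j>r_{\!_{\cal A}}$ I argue as follows: by part~2, $A_j$ lies in a single coset of $D_{\!_{\cal A}}$, and by Lemma~\ref{lem:excitinglemma} applied to any $A_i$ with $i\leq r_{\!_{\cal A}}$ (whose coset $a_i+H_i=H_i$ contains $D_{\!_{\cal A}}$) we have $A_j\cap D_{\!_{\cal A}}=\emptyset$, so that coset is a non-identity coset of $D_{\!_{\cal A}}$. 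Since $D_{\!_{\cal A}}\leq H$, the group $H$ is itself a union of cosets of $D_{\!_{\cal A}}$, so the coset containing $A_j$ is either entirely inside $H\setminus D_{\!_{\cal A}}$ or entirely outside $H$, and the dichotomy is inherited by $A_j$.

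Part~4 is then immediate from the final proposition of the subsection, applied to the sets with index $>k$: these are exactly the $A_j$ lying in $A\setminus H$, which that proposition shows to arise as a subdivision of cosets of $H$. The only non-bookkeeping ingredient in the whole argument is the coset-of-$D_{\!_{\cal A}}$ dichotomy used for the relabelling in part~3; everything else is direct quotation of the preceding propositions.
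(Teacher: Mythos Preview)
Your proposal is correct and matches the paper's approach exactly: the paper presents Theorem~\ref{thm:properties} explicitly as a summary of the preceding propositions (Proposition~\ref{prop:itsastar}, the remark following the definition of canonical position, Proposition~\ref{prop:Hi<DA}, and the two unlabelled propositions on $H\setminus D_{\!_{\cal A}}\subseteq A$ and on $A\setminus H$), with no separate proof. The one point you make slightly more explicit than the paper---that for $j>r_{\!_{\cal A}}$ the coset of $D_{\!_{\cal A}}$ containing $A_j$ cannot be $D_{\!_{\cal A}}$ itself, via Lemma~\ref{lem:excitinglemma}---is exactly the fact $D_{\!_{\cal A}}\cap A=\emptyset$ already noted in the proof of Proposition~\ref{prop:itsastar}.
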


We will see that not only are the conditions of Theorem~\ref{thm:properties} necessary for a collection of sets to be bimodal, they are sufficient as well.  We present a technique for constructing a bimodal collection of sets with $r_{\!_{\cal A}}\geq 2$.
\begin{theorem}
Let $G$ be an abelian group, and for $t\geq 2$ let $H_1,H_2,\dotsc,H_t$ be distinct subgroups of $G$ forming a $t$-star with kernel $D$, such that $|H_i:D|>2$ for $i$ with $1\leq i \leq t$.  Let $H=H_1+H_2+\dotsb+H_t$. 

Let $\cal A$ consist of the following subsets of $G$:
\begin{enumerate}
\item all subsets of the form $A_i=H_i\setminus D$ for $i$ with $1\leq i \leq t$;
\item all cosets of $D$ that are subsets of $H$, but are not in $\cup_{i=1}^t H_i$;
\item for any number of cosets of $H$, all the cosets of $D$ that lie within those cosets of $H$.
\end{enumerate} 
Then $\cal A$ is a bimodal collection of subsets of $G$ with $r_{{\!_{\cal A}}}=t$ in canonical position.
\end{theorem}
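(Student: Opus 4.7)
My plan is to apply Theorem~\ref{thm:bimodal}: for each set $A_j\in{\cal A}$ I will identify its internal difference group and verify that $B_j = A\setminus A_j$ is a union of cosets of that group. The preliminary task is to pin down the internal difference groups.

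For a set $A_i = H_i\setminus D$ coming from item~(1), I claim the internal difference group is $H_i$ itself. All differences lie in $H_i$, so it suffices to show the differences generate $H_i$. Passing to the quotient $H_i/D$, the image of $A_i$ is $(H_i/D)\setminus\{0\}$, which by the hypothesis $|H_i:D|>2$ contains at least two elements; a short argument shows every element of $H_i/D$ can be written as a difference of two such non-identity cosets, so the differences of $A_i$ project onto all of $H_i/D$. Combined with in-coset differences (which exhaust $D$ when $|D|>1$, and trivially cover $D$ when $|D|=1$), this gives internal difference group equal to $H_i$. For each $A_j$ arising from items~(2) or~(3), $A_j$ is a full coset of $D$, so its internal difference group is $D$ itself.

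Next I will describe $A=\bigcup_j A_j$ explicitly. Since $D\subseteq \bigcup_i H_i$, item~(1) contributes $(\bigcup_i H_i)\setminus D$ and item~(2) contributes $H\setminus \bigcup_i H_i$; together these yield $H\setminus D$. Let $K$ denote the union of the cosets of $H$ chosen in item~(3); none of them is $H$ itself, so $K\cap H=\emptyset$ and $A=(H\setminus D)\sqcup K$. With this description the bimodal check is immediate. For $i\le t$, $B_i = (H\setminus H_i)\cup K$, and both summands are unions of cosets of $H_i$ since $H_i\le H$. For any other $A_j$ in ${\cal A}$, $A$ is itself a union of cosets of $D$ (as $D\le H$), so $B_j = A\setminus A_j$ is a union of cosets of $D$, which is the internal difference group of $A_j$.

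Finally, $r_{\!_{\cal A}}=t$ holds because each category-(1) set has $|A_i|=|H_i|-|D|<|H_i|$, whereas every other $A_j$ has size $|D|$ equal to the order of its internal difference group; and the collection is in canonical position because $D_{\!_{\cal A}}=(a_1+H_1)\setminus A_1 = D$ is already a subgroup of $G$. The main obstacle will be the computation of the internal difference groups for the category-(1) sets, and this is precisely where the hypothesis $|H_i:D|>2$ is needed: if the index were $2$, the set $A_i$ would be a single coset of $D$ and its internal difference group would collapse to $D$, spoiling both the claim that $A_i=H_i\setminus D$ generates $H_i$ and the count $r_{\!_{\cal A}}=t$. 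Everything else is set-theoretic bookkeeping built on top of Theorem~\ref{thm:bimodal}.
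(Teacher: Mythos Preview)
Your proposal is correct and follows essentially the same route as the paper: compute the internal difference groups (namely $H_i$ for the sets $A_i=H_i\setminus D$ and $D$ for the remaining sets), then check the complement $B_j$ is a union of cosets of that group via Theorem~\ref{thm:bimodal}. The only cosmetic difference is that you establish the internal difference group of $A_i$ by passing to the quotient $H_i/D$, whereas the paper argues directly by fixing two elements $x,y\in A_i$ in distinct $D$-cosets and noting that $\{x-a:a\in A_i\}\cup\{y-a:a\in A_i\}=H_i$; your explicit verification that $r_{\!_{\cal A}}=t$ and that the collection is in canonical position is a welcome addition that the paper leaves implicit.
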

\begin{proof}
We will prove this result by applying Theorem~\ref{thm:bimodal} to each of the internal difference groups of the sets in $\cal A$. For any subset which is a coset of $D$, its internal difference group is the subgroup $D$ itself.  For any $A_i$ with $1 \leq i \leq t$, its internal difference group is $H_i$: since the index of $D$ in $H_i$ is greater than $2$, we know that $A_i$ contains at least $2$ distinct cosets of $D$.
Fix an element $x \in A_i$ and consider all differences between $x$ and the elements of $A_i$; these will give all elements of $H_i$ except for those in the coset $x+D$.  Now, pick an element $y \in A_i$ such that $x$ and $y$ lie in different cosets of $D$.  Taking all differences between $y$ and the elements of $A_i$ yields all elements of $H_i$ except those in $y+D$.  Hence all elements of $H_i$ lie in the internal difference group of $A_i$, as required.

We observe that every set in $\cal A$ is a union of cosets of $D$.  Thus for any set whose internal difference group is $D$, the union of the remaining sets is a union of cosets of $D$, as required.  For $A_i$ with $1 \leq i \leq t$, we note that $A_i \cup D=H_i$ by construction, so $B_i$ consists of $H \setminus H_i$ together with a union of cosets of $H$, and is therefore a union of cosets of $H_i$.
\end{proof}

\begin{remark}
By Theorem \ref{thm:subdivision}, new bimodal collections can be obtained by subdividing those subsets in the above construction that are cosets of $D$.  Comparing this construction with Theorem~\ref{thm:properties}, we see that all bimodal collections of sets with $r_{\!_{\cal A}}\geq 2$ arise in this way.
\end{remark}

%\begin{remark}
%%%%If $D_{\cal{A}}$ is a singleton set then $\cal A$ is a Type 3 bimodal collection. Inflating a Type 3 bimodal collection by a non-trivial group $H$ will give an example with non-trivial $D_{\!_{\cal A}}=H$.
%\end{remark}

\subsection{The case when $r_{\!_{\cal A}} =1$ }

We now consider the situation where $r_{\!_{\cal A}}=1$, i.e. when precisely one of the sets $A_i$ does not fill the coset of $H_i$ which contains it.

\begin{proposition}
Let ${\cal A}=\{A_1,A_2,\dotsc,A_m\}$ be a bimodal collection of disjoint subsets of an abelian group $G$ with $r_{\!_{\cal A}}=1$.  Then
\begin{enumerate}
\item for all $i$ with $2\leq i \leq m$, $A_i$ is a coset of $H_i$ and $H_i$ is a subgroup of $H_1$; 
\item $A_1$ is a proper subset of a coset of $H_1$, and consists of a union of cosets of the group $D=H_2+H_3+\dotsb+H_m$.
\end{enumerate}
%{\bf Please go over me carefully when more awake!!}
\end{proposition}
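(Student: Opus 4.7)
The plan is to exploit the asymmetry between $A_1$, which does not fill its coset, and the other sets $A_i$ with $i\geq 2$, each of which does. The key observation is that for $i\geq 2$ the set $A_i$ equals the coset $a_i+H_i$ (by the $r_{\!_{\cal A}}=1$ hypothesis), while $B_i$ is a union of cosets of $H_i$ by Theorem~\ref{thm:bimodal}; hence $A=A_i\sqcup B_i$ is itself a union of cosets of $H_i$. Consequently $A+h=A$ for every $h\in H_i$ with $i\geq 2$, and this invariance of the ambient set $A$ is the main tool for both parts.

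For part~1 I would argue by contradiction to show $H_i\leq H_1$. Since $A_1\subseteq a_1+H_1$ and Lemma~\ref{lem:excitinglemma} gives $A_k\cap (a_1+H_1)=\emptyset$ for $k\neq 1$, we have $A\cap(a_1+H_1)=A_1$; moreover, for any other coset $C$ of $H_1$, $A\cap C=B_1\cap C$ is either $\emptyset$ or the full coset $C$, because $B_1$ is a union of cosets of $H_1$. Now suppose some $i\geq 2$ has $h\in H_i\setminus H_1$. Translating the equality $A\cap(a_1+H_1)=A_1$ by $h$ and using $A+h=A$ yields $A\cap(a_1+h+H_1)=A_1+h$. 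But $a_1+h+H_1$ is a coset of $H_1$ distinct from $a_1+H_1$, so the dichotomy forces $A_1+h$ to be $\emptyset$ (impossible, since $A_1\neq\emptyset$) or the full coset $a_1+h+H_1$ (impossible, since this would give $|A_1|=|H_1|$). Hence $H_i\leq H_1$ for each $i\geq 2$.

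For part~2, the first claim yields $D=H_2+H_3+\dotsb+H_m\leq H_1$, and $A$ is closed under translation by each $H_i$ with $i\geq 2$, hence by $D$. For any $d\in D$, since $d\in H_1$ we have $a_1+H_1+d=a_1+H_1$, and therefore $A_1+d=(A\cap(a_1+H_1))+d=(A+d)\cap(a_1+H_1+d)=A\cap(a_1+H_1)=A_1$. So $A_1$ is a union of cosets of $D$, and the bound $|A_1|<|H_1|$ immediately gives that $A_1$ is a proper subset of the coset $a_1+H_1$. The main obstacle I expect is pinning down the right invariance — recognising that it is $A$ itself (and not merely $B_i$) which is a union of cosets of $H_i$ for $i\geq 2$ — and combining this with Lemma~\ref{lem:excitinglemma} to force the sharp dichotomy on how $A_1$ can sit inside $a_1+H_1$.
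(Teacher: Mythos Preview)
Your proof is correct and uses essentially the same ingredients as the paper's: the observation that $A=A_i\sqcup B_i$ is a union of cosets of $H_i$ for $i\geq 2$, Lemma~\ref{lem:excitinglemma} to conclude that $(a_1+H_1)\setminus A_1$ lies outside $A$, and Theorem~\ref{thm:bimodal} to control $B_1$. The paper argues by element chasing (pick $u\in (a_1+H_1)\setminus A_1$ and $v\in A_1$, push $v$ by $h_i$ into $B_1$, then by $u-v\in H_1$ to get $u+h_i\in B_1\subseteq A$, and finally pull back by $H_i$-invariance of $A$ to obtain the contradiction $u\in A$), whereas you package the same moves as the single set identity $A_1=A\cap(a_1+H_1)$ and translate it; your version is a little cleaner but not a genuinely different route.
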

\begin{proof}
\begin{enumerate}
\item \label{sadfasdfa}Suppose that for some $i$ with $2\leq i \leq m$ there exists $h_i\in H_i$ with $h_i\notin H_1$.  Let $u\in (a_1+H_1)\setminus A_1$, and let $v$ be in $A_1$.  Then $u-v\in H_1$. We have $v\in B_i$, so $v+h_i\in B_i$.  Since $h_i\notin H_1$ we have that $v+h_i\in B_1$.  This implies that $v+h_i+(u-v)\in B_1$, i.e. that $u+h_i\in B_1\subseteq A$.  Since $A_i$ consists of a coset of $H_i$, we know that $A=A_i\sqcup B_i$ is a union of cosets of $H_i$.  This implies that if $u+h_i\in A$ then $u\in A$.  However, by Lemma~\ref{lem:excitinglemma} we know that no element of $(a_1+H_1)\setminus A_1$ lies in $A$, which gives a contradiction.

\item Let $x\in A_1$.  We want to show that for any $d\in D$ we have $x+d\in A_1$.  Now $d=h_2+h_3+\dotsb+h_m$ where $h_i\in H_i$ for $2\leq i \leq m$.  Since $x\in B_2$ we have $x+h_2\in B_2$.  However, by \ref{sadfasdfa} we know $H_2\leq H_1$, and so $x+h_2\in B_2 \cap (a_1+H_1)=A_1$.  This implies $x+h_2\in B_3$, whence $x+h_2+h_3\in B_3$.  Proceeding in this manner we determine that $x+d\in A_1$ as required.
\end{enumerate}
\end{proof}

\begin{definition}
A bimodal collection ${\cal A}=\{A_1,A_2,\dotsc,A_m\}$ of disjoint subsets of an abelian group $G$ with $r_{\!_{\cal A}}=1$ will be said to be \emph{in canonical position} if it has been shifted so that $A_1 \subseteq H_1$ and the subgroup $D=H_2+H_3+\dotsb+H_m$ is contained in $H_1 \setminus A_1$.
\end{definition}

We summarise the structural properties of the situation.
\begin{theorem}
Let ${\cal A}=\{A_1,A_2,\dotsc,A_m\}$ be a bimodal collection of disjoint subsets of an abelian group $G$ with $r_{\!_{\cal A}}=1$, in canonical position. Let $D=H_2+H_3+\dotsb+H_m$.  Then
\begin{enumerate}
\item $A_1 \subseteq H_1 \setminus D$ and $A_1$ is a union of cosets of $D$;
\item Each set $A_i$ with $2\leq i \leq m$ is a coset of $H_i$, and $A_2,\dotsc,A_m$ arise from a subdivision of cosets of $H_1$.
\end{enumerate}
\end{theorem}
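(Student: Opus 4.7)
The plan is to show that this theorem is essentially a repackaging of the preceding proposition combined with the definition of canonical position, with only one genuinely new ingredient: recognising that the sets $A_2,\dotsc,A_m$ fit together to form a subdivision in the formal sense of Theorem~\ref{thm:subdivision}.

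For part (1), I would argue as follows. Canonical position directly gives $A_1\subseteq H_1$ and $D\subseteq H_1\setminus A_1$; the latter implies $A_1\cap D=\emptyset$, so $A_1\subseteq H_1\setminus D$. The statement that $A_1$ is a union of cosets of $D$ is precisely item~(2) of the preceding proposition.

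For part (2), the fact that each $A_i$ with $2\leq i\leq m$ is a coset of $H_i$ is immediate from the defining condition $|A_i|=|H_i|$ for $i>r_{\!_{\cal A}}$ (here $r_{\!_{\cal A}}=1$). The content to verify is the subdivision assertion. First I would invoke item~(1) of the preceding proposition to conclude $H_i\leq H_1$ for all $i\geq 2$; hence each $A_i$, being a coset of $H_i$, is contained in some coset of $H_1$. Next I would apply Theorem~\ref{thm:bimodal} to the set $A_1$: bimodality tells us that $B_1=A\setminus A_1=\bigsqcup_{i\geq 2} A_i$ is a union of cosets of $H_1$. Combining these two facts, for each coset $c+H_1$ that meets some $A_i$ ($i\geq 2$), the entire coset $c+H_1$ lies in $B_1$ and is therefore partitioned by those $A_j$ ($j\geq 2$) contained in it. Since each such $A_j$ is a coset of the subgroup $H_j\leq H_1$, this is exactly the data required for a subdivision of $c+H_1$ in the sense of Theorem~\ref{thm:subdivision}.

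The main obstacle, such as it is, lies in this last paragraph: one must be careful to argue not merely that each individual $A_i$ ($i\geq 2$) sits inside a coset of $H_1$, but that the collection as a whole exactly covers the cosets of $H_1$ that it touches, so that it genuinely \emph{partitions} a union of such cosets. Once Theorem~\ref{thm:bimodal} is applied to $A_1$ to identify $B_1$ as this union, the formal matching with the subdivision definition is immediate.
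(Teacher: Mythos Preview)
Your proposal is correct and matches the paper's treatment: the paper presents this theorem explicitly as a summary of the preceding proposition together with the definition of canonical position, and gives no separate proof. The only point you supply beyond that summary is the justification of the subdivision claim via Theorem~\ref{thm:bimodal} applied to $A_1$ (so that $B_1=\bigsqcup_{i\geq 2}A_i$ is a union of cosets of $H_1$), which is indeed the natural way to make that step precise and is consistent with how the analogous assertions are handled elsewhere in the paper.
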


We give a concrete example of a bimodal collection of sets with $r_{\!_{\cal A}}=1$.

\begin{example}Let $G=\mathbb{Z}_{36}$.  The collection $\cal A$ of subsets defined by
$A_1=\{12,15,30,33\},A_2=\{1,19\},A_3=\{4,22\},A_4=\{7,25\},A_5=\{10,28\},A_6=\{13\},A_7=\{16\},A_8=\{31\},A_9=\{34\}$ is bimodal.
We observe that $H_1=\langle3\rangle=\{0,3,6,9,12,15,18,21,24,27,30,33\}$, and $H_2=H_3=H_4=H_5=\langle18\rangle=\{0,18\}$, so $H_2+H_3+H_4+H_5+H_6+H_7+H_8+H_9=\langle18\rangle<H_1$.  Note also that $\cup_{i=2}^9 A_i$ is the coset $1+H_1$, and that $A_1$ is a union of cosets of $\langle 18 \rangle$.  Here we have  $A_1\subset H_1,$ and we note that the set $H_1\setminus A_1$ is not in fact a group.  (This is in contrast to the behaviour of bimodal collections of sets with $r_{\!_{\cal A}}\geq 2$.)
 
\end{example}

%\begin{theorem}
%We can go back the other way.
%Let $H_1$ be a subgroup of an abelian group $G$, and let $A_1$ be any proper subset of $H_1$ whose internal differences generate $H_1$. (Note: pick a group where this is possible.)  Let $S:=\{J <G$ such that $A_1$ is a union of cosets of $J \}$.  Take some cosets of $H_1$, partition them with cosets of subgroups from the set $S$, and finally include all sets from the partition in your collection.
%\end{theorem}
%\begin{proof}
%fairly immediate since everything is cosets
%\end{proof}

We will see that not only are the conditions of the above theorem necessary for a collection of sets to be bimodal, they are sufficient as well.  We present a technique for constructing a bimodal collection of sets.
\begin{theorem}
Suppose we have an abelian group $G$, a subgroup $H_1$ of $G$ and a proper subset $A_1$ of $H_1$ whose internal differences generate $H_1$.
\begin{enumerate}
\item Let $S$ be the set of all subgroups $J \leq G$ with the property that $A_1$ is a union of cosets of $J$.  

\item For any number of cosets of $H_1$, partition these cosets of $H_1$ using only cosets of subgroups from $S$.

\item Take $\cal A$ to consist of $A_1$ together with the sets in the above partition.  
\end{enumerate}
Then ${\cal A}$ forms a bimodal collection of subsets of $G$ with $r_{{\!_{\cal A}}}=1$.
%In canonical position?
\end{theorem}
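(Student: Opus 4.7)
The plan is to apply the characterisation of Theorem~\ref{thm:bimodal}, namely to verify for each $A_i$ in the constructed collection that $B_i=A\setminus A_i$ is a union of cosets of the internal difference group $H_i$. Before starting, I would record two preliminary facts. First, if $J\in S$ then $J\leq H_1$: since $A_1\subseteq H_1$ is a (nonempty) union of cosets of $J$, picking any coset $a+J\subseteq A_1\subseteq H_1$ gives $a\in H_1$ and $a+j\in H_1$ for all $j\in J$, forcing $J\leq H_1$. Second, for $i\geq 2$ the set $A_i$ is by construction a coset of some $J_i\in S$, and the internal differences of a coset $a_i+J_i$ are precisely the elements of $J_i$, so $H_i=J_i$ and in particular $|A_i|=|H_i|$. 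Combined with $|A_1|<|H_1|$ (since $A_1$ is proper in $H_1$ and generates $H_1$), this already gives $r_{\!_{\cal A}}=1$.

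Next I would verify the bimodal condition for $A_1$. Here $B_1=\bigcup_{i\geq 2}A_i$, which by construction is exactly the union of the chosen cosets of $H_1$ (those we partitioned in step~2 of the construction). So $B_1$ is visibly a union of cosets of $H_1$, as required.

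For $i\geq 2$ I would split $B_i$ as
\begin{equation*}
B_i = A_1 \;\cup\; \Bigl(\bigcup_{k\geq 2,\,k\neq i} A_k\Bigr),
\end{equation*}
and check that each piece is a union of cosets of $H_i=J_i$. The first piece $A_1$ is a union of cosets of $J_i$ directly from the defining property of $S$. For the second piece, observe that $\bigcup_{k\geq 2} A_k$ is a union of full cosets of $H_1$; since $J_i\leq H_1$, each such coset of $H_1$ is a union of cosets of $J_i$; and removing the single coset $A_i=a_i+J_i$ from this union still leaves a union of cosets of $J_i$. Hence $B_i$ is a union of cosets of $H_i$, and Theorem~\ref{thm:bimodal} yields bimodality.

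The only subtle step is the decomposition argument for $i\geq 2$: one has to notice that although the partitioning subgroups $J_k$ can differ from $J_i$, the coarser structure (everything sits inside cosets of $H_1$, and all $J_k\leq H_1$) is enough to guarantee compatibility with cosets of $J_i$. Once that observation is in hand, the rest reduces to bookkeeping and an appeal to Theorem~\ref{thm:bimodal}.
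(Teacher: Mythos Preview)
Your proof is correct and follows the same strategy as the paper: apply Theorem~\ref{thm:bimodal} by checking that each $B_i$ is a union of cosets of $H_i$. The paper's proof is a single sentence stating that this is ``immediate,'' whereas you have spelled out the details, including the useful preliminary observation that every $J\in S$ is necessarily a subgroup of $H_1$, the identification $H_i=J_i$ for $i\geq 2$, and the explicit verification that $r_{\!_{\cal A}}=1$.
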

\begin{proof}
The bimodality of ${\cal A}$ is immediate, upon applying Theorem~\ref{thm:bimodal} to each of the internal difference groups of the sets in $\cal A$.
\end{proof}

\subsection{The case when $r_{\!_{\cal A}} =0$ }
Finally, we consider the situation when each set $A_i$ in ${\cal A}$ fills its own coset of $H_i$.

\begin{theorem}
Let ${\cal A}=\{A_1,A_2,\dotsc,A_m\}$ be a bimodal collection of disjoint subsets of an abelian group $G$ with $r_{\!_{\cal A}}=0$.  Let $H$ be the group $H=H_1+H_2+\dotsb+H_m$.  Then each $A_i$ ($1 \leq i \leq m$) is a coset of $H_i$, and these sets arise from a subdivision of cosets of $H$. 
\end{theorem}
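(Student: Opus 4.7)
The key observation is that the first claim, namely that each $A_i$ is a coset of $H_i$, is immediate from the definition of $r_{\!_{\cal A}}=0$: this means $|A_i|=|H_i|$ for every $i$, and since $A_i$ is contained in the single coset $a_i+H_i$, we must have $A_i=a_i+H_i$. So the substance of the theorem lies in showing that the $A_i$ arise from a subdivision of cosets of $H$. Given that each $H_i$ is a subgroup of $H$, every coset of $H_i$ is contained in a unique coset of $H$, so each $A_i$ already lies inside some coset of $H$. The real task is therefore to prove that $A=\bigcup_{i=1}^m A_i$ is a union of cosets of $H$: once we have this, the cosets of $H$ making up $A$ are each partitioned by those $A_i$ they contain, which is precisely a subdivision in the sense of Theorem~\ref{thm:subdivision}.

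To show that $A$ is a union of cosets of $H$, I would pick an arbitrary $x\in A$ and an arbitrary $h\in H$, write $h=h_1+h_2+\dotsb+h_m$ with $h_j\in H_j$, and prove by induction on $j$ that the partial sum $x+h_1+\dotsb+h_j$ lies in $A$ for each $j=0,1,\dotsc,m$. The base case $j=0$ is the assumption $x\in A$.

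For the inductive step, let $y=x+h_1+\dotsb+h_{j-1}\in A$ and consider $y+h_j$. There are two cases. If $y\in A_j$, then since $A_j$ is a coset of $H_j$ (from the first part of the theorem) and $h_j\in H_j$, we get $y+h_j\in A_j\subseteq A$. Otherwise $y\in A\setminus A_j\subseteq B_j$, and then Remark~\ref{remark2.4} (which encodes the bimodal property via Theorem~\ref{thm:bimodal}) gives $y+h_j\in B_j\subseteq A$. Either way, $y+h_j\in A$, completing the induction.

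Having established that $A$ is a union of cosets of $H$ and that each $A_i$ is a coset of $H_i\leq H$ contained in a single coset of $H$, I would then conclude by observing that the collection of cosets of $H$ comprising $A$ is itself trivially bimodal (by Lemma~\ref{lem:cosets}), and the original collection $\cal A$ is obtained from it by subdividing each such coset of $H$ into the cosets of smaller subgroups $H_i$ provided by the $A_i$ lying inside it, which is exactly a (repeated) subdivision in the sense of Theorem~\ref{thm:subdivision}. The main conceptual step is the inductive argument of the second paragraph; I do not expect any serious obstacle beyond correctly handling the case split on whether the running partial sum happens to lie in the set $A_j$ being added at that stage.
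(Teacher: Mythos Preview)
Your proposal is correct and follows essentially the same approach as the paper: both arguments show that $A$ is a union of cosets of $H$ by writing $h=h_1+\dotsb+h_m$ and stepping through the partial sums $x+h_1+\dotsb+h_j$, then conclude the subdivision claim from $H_i\leq H$. The only cosmetic difference is that the paper packages your two-case inductive step as the single preliminary observation ``$A=A_i\sqcup B_i$ is a union of cosets of $H_i$ for every $i$'' before running the iteration, whereas you handle the $y\in A_j$ versus $y\in B_j$ split explicitly at each stage; the content is identical.
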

\begin{proof}
We first prove that $A$ is a union of cosets of $H$.  For each $i$, $A_i$ is a coset of $H_i$ and $B_i$ is a union of cosets of $H_i$, hence $A$ is a union of cosets of $H_i$ for all $i$.
Let $x\in A$ and let $h\in H$.  Then $h$ has the form $h_1+h_2+\dotsb+h_m$ with $h_i\in H_i$ for $i$ with $1\leq i \leq m$.  As $x\in A$ we thus have $x+h_1\in A$.  In turn this implies $x+h_1+h_2\in A$ and so on, so we deduce $x+h\in A$, as required.  Since $H_i \leq H$ for all $1 \leq i \leq m$, every $A_i$ is wholly contained in a single coset of $H$, so $\cal{A}$ is a subdivision of the cosets of $H$.
\end{proof}

\begin{remark}
By applying subdivision to the construction of Lemma~\ref{lem:cosets}, we obtain a bimodal collection of subsets in which each of the sets is a coset of its internal difference group, i.e. a bimodal collection with $r_{\!_{\cal A}}=0$.  We note that the above theorem shows that every such collection arises in this manner.
\end{remark}

\section{Further Questions}\label{sec:conc}
Now that we understand the full range of possible structures of bimodal collections of sets in abelian groups, we are led to some natural further questions.  The bimodal property arose in the context of studying external difference families corresponding to R-optimal AMD codes; it would be interesting to explore the potential for wider applications of this concept in related areas.   Group partitions, and particularly vector space partitions, also have a range of applications, and it would be of interest to investigate the role played by the bimodality of these structures in these applications.

Group partitions are known to exist for certain classes of nonabelian group, such as Frobenius groups, and these have been shown to give rise to bimodal collections of sets \cite{WEDF}.  It would be natural to seek to develop a comparable understanding of bimodality in a nonabelian context, although care must be taken to refine the relevant definitions where appropriate.

%\begin{itemize}
%\item We have characterised bimodal sets.
%\item It would be interesting to consider which of these give rise to RWEDFs.  Say what we know. Possibly make a conjecture?  
%\item Possible/actual further applications of bimodal things? Eg mention what group partitions are used for and surmise about generalisations.
%\end{itemize}
  
\bibliography{edfbib}

\begin{thebibliography}{1}

\bibitem{cramer}
R.~Cramer, Y.~Dodis, S.~Fehr, C.~Padr{\'o}, and D.~Wichs.
\newblock Detection of algebraic manipulation with applications to robust
  secret sharing and fuzzy extractors.
\newblock In N.~Smart, editor, {\em Advances in Cryptology -- EUROCRYPT 2008},
  pages 471--488, Berlin, Heidelberg, 2008. Springer Berlin Heidelberg.

\bibitem{ErdosRado}
P.~Erd\H{o}s and R.~Rado.
\newblock {Intersection Theorems for Systems of Sets}.
\newblock {\em Journal of the London Mathematical Society}, s1-35(1):85--90, 01
  1960.

\bibitem{furedi}
Z.~F\"{u}redi.
\newblock On finite set-systems whose every intersection is a kernel of a star.
\newblock {\em Discrete Mathematics}, 47:129 -- 132, 1983.

\bibitem{heden}
O.~Heden.
\newblock A survey of the different types of vector space partitions.
\newblock {\em Discrete Math Algorithms Appl}, 4(1), 2012.

\bibitem{WEDF}
S.~Huczynska and M.~B. Paterson.
\newblock Weighted external difference families and {R}-optimal {AMD} codes.
\newblock {\em Discrete Mathematics}, 342(3):855 -- 867, 2019.

\bibitem{PatSti}
M.~B. Paterson and D.~R. Stinson.
\newblock Combinatorial characterizations of algebraic manipulation detection
  codes involving generalized difference families.
\newblock {\em Discrete Math.}, 339(12):2891 -- 2906, 2016.

\bibitem{Zap}
G.~Zappa.
\newblock Partitions and other coverings of finite groups.
\newblock {\em Illinois J. Math.}, 47(1-2):571--580, 2003.

\end{thebibliography}

\end{document}